\newcommand{\R}{{\mathbb R}}
 \renewcommand{\Re} {\mathrm{Re\,}}
\newtheorem{theorem}{Theorem}
\newtheorem{lemma}[theorem]{Lemma}
\theoremstyle{definition}
\newtheorem{remark}{Remark}
\def\reals{{\mathbb R}}
\def\p{\partial}
\def\half{\frac{1}{2}}
\def\tchi{\tilde{\chi}}
\def\tpsi{\tilde{\psi}}
\def\supp{\mathrm{supp}\,}
\def\O{{\mathcal O}}
\def\tzeta{\tilde{\zeta}}
\def\tA{\tilde{A}}
\begin{document}

\title[Small-scale mass of Neumann eigenfunctions]{Small-scale mass estimates for Neumann eigenfunctions:  piecewise smooth planar domains}

\author[H. Christianson]{Hans Christianson}
\address{Department of Mathematics, UNC Chapel Hill} \email{hans@math.unc.edu}

\author[J. Toth]{John  A. Toth}
\address{Department of Mathematics and
Statistics, McGill University, 805 Sherbrooke Str. West, Montr\'eal
QC H3A 2K6, Ca\-na\-da.} \email{jtoth@math.mcgill.ca}

\maketitle

\begin{abstract} 
Let $\Omega$ be a piecewise-smooth, bounded convex domain in $\R^2$ and consider $L^2$-normalized Neumann eigenfunctions $\phi_{\lambda}$ with eigenvalue $\lambda^2$. Our main result is a small-scale {\em non-concentration} estimate: We prove that for {\em any} $x_0 \in \overline{\Omega},$ (including boundary and corner points)  and any $\delta \in [0,1),$ 
$$ \| \phi_\lambda \|_{B(x_0,\lambda^{-\delta})\cap \Omega} = O(\lambda^{-\delta/2}).$$ 
The proof is a stationary vector field argument combined with a small scale induction argument.

\end{abstract}\ \\
\section{introduction}
In this paper, we consider Neumann eigenfunctions in planar domains and prove a non-concentration estimate on shrinking balls, including at boundary and corner points.  Let $\Omega \subset \R^2$ be a bounded, convex planar domain with boundary $\partial \Omega.$ We say that $\Omega$ is {\em piecewise smooth} if the boundary  $\partial \Omega = \cup_{j=1}^N \Gamma_j$ such that there exist defining functions $f_j \in C^{\infty}(\R^2; \R)$ with
$$\Gamma_j \subset \{ x \in \R^2; f_j(x) = 0, \,\,\, df_j (x) \neq 0 \}.$$

 We refer to the $\Gamma_j$'s as the boundary edges. We say that a piecewise-smooth $\Omega$ is a {\em domain with corners} if the
$\Gamma_j$'s are diffeomorphic to closed intervals with $\Gamma_j \cap \Gamma_{j+1} = c_j  \in \R^2; j=1,...,,N,$ such that at   $c_j = \Gamma_j \cap \Gamma_{j+1},$ 
$$ \text{rank} \, ( df_j(c_j), df_{j+1}(c_j) ) = 2.$$
We refer to ${\mathcal C}:= \{ c_j \}_{j=1}^{N}$ as the set of {\em corner points} and the rank condition on the defining functions at the $c_j$'s ensures that the boundary edges $\Gamma_j; j=1,...,N$ intersect at non-zero angles. We denote the angle at a corner $c_j$ by $\alpha_j.$

A fundamental issue regarding eigenfunctions involves their concentration properties (or lack thereof) on small  balls with radius that depends on the eigenvalue $\lambda^2$ as $\lambda \to \infty.$  

Let $(M,g)$ be a compact Riemannian manifold without boundary and $\phi_\lambda$  a Laplace eigenfunction with eigenvalue $\lambda^{2}$. Then, as pointed  out in \cite{So}, using the explicit asymptotic formula for the half-wave operator $e^{it \sqrt{- \Delta}}: C^{\infty}(M) \to C^{\infty}(M)$ it is not hard to prove that there exists $C_M>0$ such that 

\begin{equation} \label{soggebound}
\| \phi_\lambda \|_{L^2(B(r))}^2 = O(r) \| \phi_\lambda \|_{L^2(M)}^2, \quad  \forall r \geq C_M \lambda^{-1} \end{equation}\

We refer to estimates of the form (\ref{soggebound}) as {\em non-concentration} bounds.
The example of highest weight spherical harmonics  on the round sphere (see also Remark \ref{gaussian} below) shows that (\ref{soggebound}) is, in general, sharp. However, in certain cases, one expects improvements. For instance, in the case of surfaces with non-positive curvature, one can get logarithmic improvements  \cite{So} (see also \cite{Han, HR}).

Since the proof of (\ref{soggebound}) uses the wave parametrix in a crucial way, the extension to manifolds with boundary is  non-obvious since the behaviour of  the  wave operators near $\partial \Omega$ is much more complicated than in the boundaryless case.
The   main result of this paper is an
extension of the bounds in (\ref{soggebound}) to Neumann
eigenfunctions of a bounded piecewise-smooth, convex planar
domain. Our basic method of proof here is entirely stationary and uses
a Rellich commutator argument rather than wave methods.
 This stationary approach allows us to deal with points on {\em boundaries and corners} as well as interior points.  We stress that our result below holds right up to the boundary, including corner points.

In order to state the theorem, it is useful to switch to the  standard semiclassical notation with $h = \lambda^{-1}$ so that asymptotics are estimated as $h \to 0^+$.
\begin{theorem}
  \label{T:non-con}
  Let $\Omega \subset \reals^2$ be a piecewise $C^\infty$ bounded, convex
  domain 
  and consider the semiclassical Neumann eigenfunction problem:
  \begin{equation*}
    \begin{cases}
      -h^2 \Delta \phi_h(x) = \phi_h(x),  \quad x \in \Omega, \\
      \p_\nu \phi_h  |_{\p \Omega} = 0, \\
      \| \phi_h \|_{L^2 ( \Omega) } = 1,
    \end{cases}
  \end{equation*}
  where $\p_\nu$ is the outward pointing normal derivative.
Let 
  $p_0 \in
  \overline{\Omega}$ be a point in $\Omega$ or on the boundary (including corners).  Then
  for any $0 \leq \delta <1$, 
  
  \begin{equation}
    \| \phi_h \|^2_{L^2 ( B(p_0 , h^{\delta}) \cap \overline{\Omega})} = O(h^{\delta}).
  \end{equation}

\end{theorem}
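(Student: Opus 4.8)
My plan is to run a Rellich--Pohozaev commutator argument against a carefully chosen vector field, iterated over dyadic scales. The basic identity is the following: if $X$ is a smooth vector field on $\R^2$ and $\phi_h$ solves the semiclassical Neumann problem, then integrating $[-h^2\Delta - 1, X]\phi_h$ against $\phi_h$ over $\Omega$ and using the equation gives an identity relating an interior bulk term (involving $\operatorname{div} X$, $dX$, etc.) to a boundary term on $\p\Omega$. Because we are in the Neumann case, $\p_\nu\phi_h|_{\p\Omega}=0$, so the boundary contributions simplify considerably: the troublesome normal-derivative pieces drop, and only tangential derivatives of $\phi_h$ along $\p\Omega$ and the quantity $|\phi_h|^2$ on $\p\Omega$ survive, weighted by $\langle X,\nu\rangle$. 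Choosing $X$ to vanish to appropriate order on $\p\Omega$, or to be tangent to $\p\Omega$ near the relevant edges, will let me kill or control the boundary terms; convexity of $\Omega$ is what makes a globally ``good'' choice of $X$ possible (the sign of $\langle X,\nu\rangle$ can be controlled), and near a corner $c_j$ the two defining functions with the rank-2 condition let me build a vector field adapted to the wedge.

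The key steps, in order: (1) Fix $p_0\in\overline\Omega$ and, for a scale $r\in[h,1)$, choose a vector field $X = X_{p_0,r}$ that near $B(p_0,r)$ looks like $(x-p_0)$ (the radial/Euclidean dilation field centered at $p_0$), smoothly cut off to be supported in, say, $B(p_0,2r)$, and \emph{modified near $\p\Omega$} so as to be tangent to each edge $\Gamma_j$ and to behave correctly at corners. (2) Apply the Rellich identity with this $X$. The commutator $[-h^2\Delta-1,X]$ produces, on the principal-symbol level, a term $\sim -2h^2\langle \nabla\phi_h, (dX)\nabla\phi_h\rangle$ plus lower order; for the radial field $dX = \mathrm{Id}$, so the main interior term is essentially $-2h^2|\nabla\phi_h|^2$ on $B(p_0,r)$, which by the eigenvalue equation and integration by parts is comparable to $-2\|\phi_h\|^2_{B(p_0,r)\cap\Omega}$ up to commutator/cutoff errors. (3) The cutoff errors are supported in the annulus $B(p_0,2r)\setminus B(p_0,r)$ and carry a factor $h/r$ (one derivative of a cutoff at scale $r$, times $h^2$, against $h^{-1}\nabla$-type quantities), so they are bounded by $O(h r^{-1})\|\phi_h\|^2_{B(p_0,2r)\cap\Omega}$. (4) The boundary terms, by the tangency construction and the Neumann condition, reduce to controlled multiples of $\|\phi_h\|^2$ on $\p\Omega\cap B(p_0,2r)$ with a favorable sign from convexity — alternatively one absorbs them using a standard restriction bound $\|\phi_h\|^2_{L^2(\p\Omega\cap B(p_0,r))} = O(r)$ that itself follows from a companion Rellich argument. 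Assembling (2)--(4) yields a recursive inequality of the schematic form
\begin{equation*}
  \|\phi_h\|^2_{B(p_0,r)\cap\Omega} \leq C\,\frac{h}{r}\,\|\phi_h\|^2_{B(p_0,2r)\cap\Omega} + (\text{boundary}),
\end{equation*}
and then (5) I iterate this from the scale $r = h^\delta$ down... wait, the inequality as written already gives the gain in one step once $r\gtrsim h^{\delta}$ with $\delta<1$, since $h/r \lesssim h^{1-\delta}$; more precisely one runs the induction \emph{upward} in scale, starting from the base case $r\sim 1$ where $\|\phi_h\|^2_{B(p_0,1)\cap\Omega}\leq\|\phi_h\|^2_{L^2(\Omega)}=1$, and halving $r$ at each stage: after $k$ steps with $2^{-k}\sim h^\delta$ one accumulates a product of the small factors, with the geometric series in $h/r = h^{1-\delta}\cdot 2^{j}$ converging because $\delta<1$, producing the claimed $O(h^\delta)$.

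The main obstacle I expect is step (1) together with step (4): constructing \emph{one} vector field that is simultaneously (a) equal to the dilation field $x-p_0$ well inside $B(p_0,r)$ so the interior term has the right sign and size, (b) tangent to every edge $\Gamma_j$ meeting $B(p_0,2r)$ so the Neumann boundary terms are benign, and (c) well-behaved at a corner $c_j$, where ``tangent to two edges at once'' is impossible and one instead needs a field vanishing at $c_j$ whose linearization respects the wedge angle $\alpha_j$ — this is exactly where the rank-2 condition on $(df_j,df_{j+1})$ and convexity ($\alpha_j<\pi$) are used, and getting the corner boundary terms to have the right sign (rather than merely being bounded) is the delicate point. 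The secondary technical nuisance is bookkeeping the $O(h/r)$ errors uniformly in $p_0$ and $r$ through the induction so that the constant $C$ does not degrade; this is routine once the vector field is in hand, but requires care near corners where the natural coordinates are singular. I would handle the corner case by working in polar-type coordinates $(\rho,\theta)$ centered at $c_j$ adapted to the wedge, where the dilation field is simply $\rho\,\p_\rho$, which \emph{is} automatically tangent to both straight edges of the model wedge, reducing the corner analysis to controlling the perturbation coming from the curvature of the $\Gamma_j$'s — an $O(r)$ error that is harmless.
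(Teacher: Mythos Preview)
There are two genuine gaps in your proposal, and together they prevent the argument from going through.

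\textbf{Gap 1: the cutoff errors are not $O(h/r)$.} In step~(3) you assert that the errors from differentiating the cutoff carry a factor $h/r$. This is false for the cutoff radial field $X=\chi_r(x-p_0)$. Computing $[-h^2\Delta-1,X]$ and integrating against $\phi_h$, the principal contribution is $2\int h^2\,(dX)(\nabla\phi_h,\nabla\phi_h)$, and on the annulus $dX$ has the piece $(\nabla\chi_r)\otimes(x-p_0)$, whose entries are of size $r^{-1}\cdot r\sim 1$, \emph{not} $h/r$. After one integration by parts this produces $\int_{A_r}O(1)\,|h\nabla\phi_h|^2\lesssim \|\phi_h\|^2_{B(2r)}$ with no small prefactor. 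So the actual recursion is $\|\phi_h\|^2_{B(r)}\le C\|\phi_h\|^2_{B(2r)}+\text{(bdy)}$, which gives no gain. In fact your claimed inequality $\|\phi_h\|^2_{B(r)}\le C(h/r)\|\phi_h\|^2_{B(2r)}$ cannot hold uniformly: take $r\sim 1$ and it says $1\le Ch$. The paper avoids this by using a \emph{monotone} scalar multiplier, $\tilde\chi(x/h^{\delta})\partial_x$, whose derivative $\chi_x\sim h^{-\delta}$ is nonnegative everywhere; the gain $h^{-\delta}$ then appears multiplicatively in front of $\int_{B(h^\delta)}|h\partial_x\phi_h|^2$, while the second-derivative error $h\chi_{xx}=O(h^{1-2\delta})$ is $O(1)$ only for $\delta\le 1/2$, forcing an inductive bootstrap $\delta_k=1-1/(3k)$ rather than a dyadic iteration.

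\textbf{Gap 2: the Neumann boundary terms do not ``drop.''} You write that the troublesome normal-derivative pieces disappear under the Neumann condition. Only one of the two Green's-formula boundary integrals vanishes; the surviving one is $-\int_{\partial\Omega}(h\partial_\nu(X\cdot h\nabla\phi_h))\,\phi_h\,dS$, and expanding it produces a term $\langle X,\nu\rangle\,h^2\partial_\nu^2\phi_h\cdot\phi_h$. Since $h^2\partial_\nu^2\phi_h=-\phi_h-h^2\partial_\tau^2\phi_h$ on $\partial\Omega$ (from the equation and Neumann), this is a genuine $O(1)$ boundary quantity with no favorable sign. Making $X$ exactly tangent to $\partial\Omega$ while keeping $X=(x-p_0)$ on $B(p_0,r)$ is impossible when $p_0\in\partial\Omega$ and the boundary is curved; the radial field has normal component $O(r^2)$ there, and you give no mechanism to absorb the resulting boundary error (which requires a restriction bound you do not establish). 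The paper resolves this not by tangency but by running \emph{two} commutators, with $\chi\partial_x$ and a companion $\rho\partial_y$ built so that $\rho=\alpha'\chi$ and $\rho_y=\chi_x+O(1)$ on $\partial\Omega$; the dangerous $h^2\partial_\nu^2\phi_h$ and $h\chi_x\,h\partial_\tau\phi_h$ boundary contributions then cancel \emph{exactly} between the two. At a corner a third piece $\rho_2\partial_y$ on the lower half is needed, with matching along the artificial cut $\{y=0\}$; this pairwise-cancellation structure, not a single tangent field, is the missing idea.
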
\

\begin{remark}
We stress that this result holds at any point, including corners where (possibly curved) transversal boundary components meet.

In the piecewise smooth case in the present paper, we assume the domain is convex so that eigenfunctions are in $H^2$.  Blowup asymptotics (at least for Dirichlet eigenfunctions) at a non-convex corner show eigenfunctions need not be globally in $H^2$ in general.

\end{remark}

\begin{remark}

In a forthcoming companion paper \cite{ChTo-non-con-smooth}, we further investigate non-concentration estimates at interior points, as well as analytic manifolds with analytic boundary.  Interestingly, if $\Omega$ is an analytic manifold with an analytic {\it concave} boundary, then eigenfunctions can be extended past the boundary and the method for interior points works.  An example of such a manifold is a compact analytic Riemann surface with several discs removed.  On the other hand, if the boundary is not concave, the extended eigenfunctions may exhibit too much growth to apply the interior method.

\end{remark}

\begin{remark}
The theorem is also true for Dirichlet eigenfunctions, but the proof
in that case is much easier.  We will point out the small
modifications necessary to the proof of Theorem \ref{T:non-con} in the proof.

\end{remark}
\begin{remark}
\label{R:h-delta}
  
  As the proof will indicate,
  the bound for eigenfunction $L^2$ mass in
  a ball of radius $h^\delta$, $0 \leq \delta \leq 1/2$, is relatively straightforward.
The cases where $0 \leq \delta <1/2$ follow immediately from the argument
proving the $\delta = 1/2$ case.  
  To
  improve to $1/2 < \delta < 1$, we use the estimate for $\delta =
  1/2$ to bootstrap to $\delta = 2/3$.  Then an induction step proves
  that for any integer $k>0$, the result is true for $\delta = 1-1/3k$.

\end{remark}

\begin{remark} \label{gaussian}  The estimate in Theorem \ref{T:non-con} is sharp, at least for $\delta = 1/2$. To see this,  let $\gamma \subset \Omega$ be a geodesic segment with $\gamma = \{ (x',x_n=0) \in \Omega; |x'| < \delta \}$ and $U = \{(x',x_n); |x_n| < \delta \}$ be a tubular neighbourhood, where $(x',x_n): U \to \R^n$ are Fermi coordinates. An $L^2$-normalized Gaussian beam localized on $\gamma$ is  of the form
$$ \phi_h(x) = (2\pi h)^{-1/4} e^{-x_n^2/h} \,e^{ i x'/h} \, ( \,  a(x',x_n;) + O(h) \,); \, a \in C^{\infty}(U), \,\, |a(x)| >0, \,\, x \in U.$$
It follows that
$$ \|\phi_h \|_{B(0,h^{1/2})}^2 \sim \int_{|x_n| \leq h^{1/2}} \int_{|x'| < h^{1/2}} |\phi_h(x)|^2 \, dx \sim h^{1/2}.$$

Consider the case where $\Omega = \{(x,y); \frac{x^2}{a^2} +
\frac{y^2}{b^2} = 1, y \geq 0 \}$  where $a>b>0$ is the half-ellipse
and let $\phi_h$ be an $L^2$-normalized Neumann eigenfunction. It is
well-known (see \cite{TZ1} section 2.2)  that there exists a subsequence of eigenfunctions that are Gaussian beams along the major axis $\{ (x,0); - a \leq x \leq a \}.$
Consequently, the estimate in Theorem \ref{T:non-con} is sharp in
general. In the special case where the $u_h$ satisfy polynomial
 small-scale quantum ergodicity (SSQE) on a scales $h^{1/2},$ since the volume of a ball of radius $h^{1/2}$ is
$h,$ one putatively expects a bound of $O(h)$ on the RHS in Theorem
\ref{T:non-con}. Unfortunately, to our knowledge, there are no
rigorous results  on polynomial SSQE known at present,
although logarithmic SSQE was proved by X. Han \cite{Han}. \end{remark}


 \section{One point non-concentration in shrinking balls}

Before jumping into the details of the proof of Theorem
\ref{T:non-con}, let us sketch the main intuitive idea.  The result for interior points follows easily from the result for boundary points, so we sketch the idea in the case of a point on a flat side; the analysis for a point on a curved side and at a corner point will be in the full proof below.  Suppose $p_0$
is a point on a flat side of $\p \Omega$, and assume for simplicity
that $\p \Omega = \{ y = 0 \}$ locally near $p_0$ and $p_0 = (0,0)$.
Let $\chi$ be a smooth monotone bounded function, $\chi(y) \sim h^{-1/2} y$
in an $h^{1/2}$ neighbourhood of $y = 0$, and constant outside a
neighbourhood of size $M h^{1/2}$ for large $M$.  Then $\chi'(y)$ is a
bump function supported on $-M h^{1/2} \leq y \leq M h^{1/2}$ with
$\chi'(y) \sim h^{-1/2}$ on $-h^{1/2} \leq y \leq h^{1/2}$.  We then
apply a Rellich commutator type argument:
\begin{align*}
  \int_\Omega & ([-h^2 \Delta -1, \chi \p_y ] \phi_h) \phi_h dV \\
  & = -2
  \int_\Omega \chi'(y) (h^2 \p_y^2 \phi_h ) \phi_h dV + \O(1) \\
  & = 2
  \int_\Omega \chi' | h \p_y \phi_h |^2 dV + \O(1) \\
  & \gtrapprox h^{-1/2} \int_{B((0,0),h^{1/2}) \cap \Omega} | h \p_y
  \phi_h |^2 dV - \O(1).
\end{align*}
Computing the commutator explicitly shows the left hand side is
bounded.  
Adding a similar computation with $\chi(x) \p_x$ and 
rearranging would prove the theorem (for $\delta = 1/2$).  
A suitable $h$-dependent cutoff allows us to integrate by parts to go
from estimating $\| h \nabla \phi_h \|_{L^2(B)}$ from below to
estimating $\| \phi_h \|_{L^2(B)}$ from below.  Here the 
$O(1)$ error term is from differentiating $\chi$ twice: $h \chi'' =
O(1)$.  This allows us to prove the theorem at the limiting  scale
$h^{1/2}$.  The tricky part is using the $\delta = 1/2$ result to 
prove the result for $\delta = 2/3$, and then applying an induction
argument to get the result for any $\delta<1$.   
Of course
in this little sketch, the $\O(1)$ terms from integrating by parts,
etc. are actually very subtle in the case of Neumann eigenfunctions,
and the bulk of the proof is dealing with these ``lower order terms''. 

%
\subsection{Notation and preliminaries}
\label{SS:preliminaries}

Let us first establish convenient coordinates for the proof.  In the case where $p_0$ is on a smooth side away from a corner, we
rotate, translate, and use graph coordinates  so that $\Gamma
\subset \{ y = \alpha(x) \}$ for  locally  smooth $\alpha$, $p_0 = (0,0)$, and $\Omega$
lies below the curve $y=\alpha(x)$.  We will eventually need to
invert $y = \alpha(x)$, so rotate further if necessary to assume that
$\alpha'(0) = 1$.  Let $\beta = \alpha^{-1}$ so that $y = \alpha(x)
\iff x = \beta(y)$ locally near $(0,0)$.  We assume as before that
$\Omega$ lies below the curve $y = \alpha(x)$; that is, locally $\Omega \subset \{ (x,y); y < \alpha(x) \}.$

Let $\kappa = (1 + (\alpha')^2)^\half$ be the arclength element with
respect to $x$.  Then the normal and tangential derivatives are respectively
\begin{equation}
  \label{E:nuxy-1}
\p_\nu = -\frac{\alpha'}{\kappa} \p_x + \frac{1}{\kappa} \p_y , \,\,\,
\p_\tau = \frac{1}{\kappa} \p_x + \frac{\alpha'}{\kappa} \p_y
\end{equation}
so that
\begin{equation}
  \label{E:nuxy-2}
\p_x = \frac{1}{\kappa} \p_\tau - \frac{\alpha'}{\kappa} \p_\nu ,
\,\,\, \p_y = \frac{\alpha'}{\kappa} \p_\tau + \frac{1}{\kappa} \p_\nu.
\end{equation}

Now in the case $p_0$ is a corner, 
translate and
rotate so that $p_0 = (0,0)$, and $\p \Omega$ locally has two smooth
sections.  That is, after a rotation and translation, there exist  locally  smooth functions $\alpha_1$ and
$\alpha_2$ such that $\alpha_1$ is monotone increasing, $\alpha_2$ is
monotone decreasing, $\alpha_1'(0) >0$, and $ \alpha_2'(0) <0$,
and  near $(0,0)$ 
\[
\p \Omega = \{ y = \alpha_1(x) ; 0 \leq x \leq \eta \} \cup \{ y = 
\alpha_2(x) ; 0 \leq x \leq \eta \}
\]
for some $\eta>0$ independent of $h$.  We assume further that locally $\Omega$ lies to
the right of these sections (this is automatic due to convexity of
$\Omega$).
Then locally each $\alpha_j$ has an inverse, which we denote
$\beta_j$.  That is, near $(0,0)$, $y = \alpha_j(x) \iff x =
\beta_j (y)$.

We will need to know the tangential and normal derivatives in these
coordinates.  For the top section where $y = \alpha_1(x)$, we have
already computed in \eqref{E:nuxy-1} and \eqref{E:nuxy-2} with
$\alpha$ replaced by $\alpha_1$.  For the
bottom section where $y = \alpha_2(x)$, let $\kappa_2 = (1 +
(\alpha_2')^2)^\half$ so that the tangent is $\tau = \kappa_2^{-1} ( 1
, \alpha_2')$.  Recalling that $\alpha_2' < 0$ near $0$, the outward
unit normal then is $\nu = \kappa_2^{-1}(\alpha_2' , -1 )$.  Hence
\begin{equation}
  \label{E:nuxy-3}
\p_\nu = \frac{\alpha_2'}{\kappa_2} \p_x - \frac{1}{\kappa_2} \p_y , \,\,\,
\p_\tau = \frac{1}{\kappa_2} \p_x + \frac{\alpha_2'}{\kappa_2} \p_y
\end{equation}
so that
\begin{equation}
  \label{E:nuxy-4}
\p_x = \frac{1}{\kappa_2} \p_\tau + \frac{\alpha_2'}{\kappa_2} \p_\nu ,
\,\,\, \p_y = \frac{\alpha_2'}{\kappa_2} \p_\tau - \frac{1}{\kappa_2} \p_\nu.
\end{equation}

In the following, we will employ a number of convenient spatial cutoff
functions that we introduce here.

  Let $\tchi(s) \in C^\infty ( \reals)$ satisfy the following conditions:
  \begin{itemize}

  \item $\tchi$ is odd,

  \item $\tchi' \geq 0$,

    \item $\tchi(s) \equiv -1$ for $s \leq -3$ and $\tchi(s) \equiv 1$ for $s
      \geq 3$,

    \item
      $\tchi(-1) = -1/2$ and $\tchi(1) = 1/2$,

    \item $\tchi(s) = \frac{s}{2}$ for $-1 \leq s \leq 1$.

  \end{itemize}
See Figure \ref{F:tchi-2} for a picture. 
    \begin{figure}
\hfill
\centerline{
\begingroup%
  \makeatletter%
  \providecommand\color[2][]{%
    \errmessage{(Inkscape) Color is used for the text in Inkscape, but the package 'color.sty' is not loaded}%
    \renewcommand\color[2][]{}%
  }%
  \providecommand\transparent[1]{%
    \errmessage{(Inkscape) Transparency is used (non-zero) for the text in Inkscape, but the package 'transparent.sty' is not loaded}%
    \renewcommand\transparent[1]{}%
  }%
  \providecommand\rotatebox[2]{#2}%
  \newcommand*\fsize{\dimexpr\f@size pt\relax}%
  \newcommand*\lineheight[1]{\fontsize{\fsize}{#1\fsize}\selectfont}%
  \ifx\svgwidth\undefined%
    \setlength{\unitlength}{273.47494125bp}%
    \ifx\svgscale\undefined%
      \relax%
    \else%
      \setlength{\unitlength}{\unitlength * \real{\svgscale}}%
    \fi%
  \else%
    \setlength{\unitlength}{\svgwidth}%
  \fi%
  \global\let\svgwidth\undefined%
  \global\let\svgscale\undefined%
  \makeatother%
  \begin{picture}(1,0.38988665)%
    \lineheight{1}%
    \setlength\tabcolsep{0pt}%
    \put(0,0){\includegraphics[width=\unitlength,page=1]{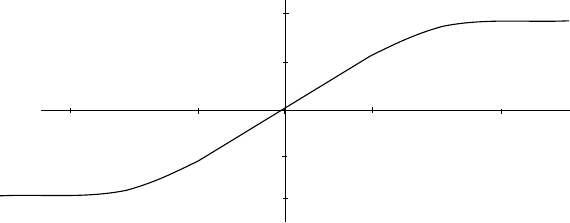}}%
    \put(0.09378875,0.16286937){\color[rgb]{0,0,0}\makebox(0,0)[lt]{\lineheight{1.25}\smash{\begin{tabular}[t]{l}$-3$\end{tabular}}}}%
    \put(0.32922772,0.16438834){\color[rgb]{0,0,0}\makebox(0,0)[lt]{\lineheight{1.25}\smash{\begin{tabular}[t]{l}$-1$\end{tabular}}}}%
    \put(0.64517173,0.15983148){\color[rgb]{0,0,0}\makebox(0,0)[lt]{\lineheight{1.25}\smash{\begin{tabular}[t]{l}$1$\end{tabular}}}}%
    \put(0.85782626,0.16135039){\color[rgb]{0,0,0}\makebox(0,0)[lt]{\lineheight{1.25}\smash{\begin{tabular}[t]{l}$3$\end{tabular}}}}%
  \end{picture}%
\endgroup%
}
\caption{\label{F:tchi-2} A  sketch of the function $\tchi$ used in
  the proof of Theorem \ref{T:non-con}.}
\hfill
\end{figure}
Let $\gamma(s) = \tchi'(s)$ so that $\gamma$ has support in $\{ -3
\leq s \leq 3 \}$, $\gamma(s) \geq 0$, and $\gamma(s) \equiv 1/2$ for
$| s | \leq 1$.

 In the following, it will also be  useful to define the corresponding semiclassically rescaled functions
 $$\tilde{\chi}_{\delta}(s):= \tilde{\chi} ( h^{-\delta} s), \quad \gamma_{\delta}(s):= \gamma( h^{-\delta} s), \,\,\, \delta \in [0,1].$$

    Choose also a smooth bump function $\tpsi(s) \in C^\infty (\reals)$
    satisfying
    \begin{itemize}
    \item $\tpsi(s)$ is even and $\tpsi' \leq 0$ for $s \geq 0$,

    \item $\tpsi(s) \equiv 1$ for $-1 \leq s \leq 1$,

    \item $\tpsi (s) \equiv 0$ for $| s | \geq 2$.
    \end{itemize}

In the sequel, we will need some a priori estimates on our eigenfunctions on the boundary.  This is summarized in the following Lemma, which follows from the work of Grisvard \cite{Grisvard-book} on convex domains combined with Sobolev embedding.

\begin{lemma}
\label{L:a-priori-bounded}
The Neumann eigenfunctions $\phi_h$ satisfy the following estimates:
\begin{enumerate}
\item $\phi_h \in H^2 ( \Omega)$;

\item $\phi_h \in C^\infty (\Omega)$, and $\phi_h \in C^\infty (\overline{\Omega} \setminus \mathcal{C})$ where $\mathcal{C}$ is the set of corner points as usual; and

\item for each $h>0$, there exists a constant $C_h$ such that $|\phi_h| \leq C_h$ on $\overline{\Omega}$.

\end{enumerate}

\end{lemma}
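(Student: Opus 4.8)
The plan is to establish the three assertions in sequence, working from the interior outward: interior elliptic regularity, then regularity up to the smooth part of the boundary, then $H^2$-regularity across the corners via Grisvard's theory, and finally the pointwise bound from the two-dimensional Sobolev embedding. Since $\phi_h$ is by construction an eigenfunction of the Neumann Laplacian, whose form domain is $H^1(\Omega)$, we start from $\phi_h \in H^1(\Omega)$ satisfying the weak formulation. In any interior ball, $-h^2 \Delta \phi_h = \phi_h$ holds distributionally with $\phi_h \in L^2_{\mathrm{loc}}$, so interior elliptic regularity plus the bootstrap $\phi_h \in H^k_{\mathrm{loc}} \Rightarrow \Delta \phi_h = -h^{-2}\phi_h \in H^k_{\mathrm{loc}} \Rightarrow \phi_h \in H^{k+2}_{\mathrm{loc}}$ gives $\phi_h \in C^\infty(\Omega)$. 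The same bootstrap applies near any boundary point that lies on a single smooth edge $\Gamma_j$ and away from the corner set $\mathcal{C}$: there the boundary is smooth and elliptic regularity for the Neumann problem $(-h^2\Delta-1,\p_\nu)$ gives $\phi_h \in H^k$ in a relatively open neighbourhood of such a point in $\overline{\Omega}$, for every $k$, whence $\phi_h \in C^\infty(\overline{\Omega}\setminus\mathcal{C})$. This proves (2) and reduces (1) to $H^2$-regularity near each corner $c_j$.

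For the corners I would invoke Grisvard's regularity theory for second-order elliptic boundary value problems on plane domains with corners (\cite{Grisvard-book}): for a bounded domain whose boundary is a curvilinear polygon all of whose interior angles satisfy $\alpha_j < \pi$ --- precisely our convexity hypothesis --- the solution of $-\Delta u = f$, $f \in L^2(\Omega)$, $\p_\nu u|_{\p\Omega}=0$, belongs to $H^2(\Omega)$ with $\|u\|_{H^2(\Omega)} \lesssim \|f\|_{L^2(\Omega)} + \|u\|_{L^2(\Omega)}$; curvature of the edges affects only lower-order terms near a vertex and does not spoil $H^2$-membership when the opening is less than $\pi$. Applying this with $u = \phi_h$ and $f = h^{-2}\phi_h \in L^2(\Omega)$ yields $\phi_h \in H^2(\Omega)$, which is (1). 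This is also the one place where convexity is genuinely used: at a reentrant vertex of opening $\alpha_j>\pi$ a Neumann eigenfunction generically carries a singular term of size $r^{\pi/\alpha_j}\cos(\pi\theta/\alpha_j)$, which is not in $H^2$, matching the remark after the theorem.

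Finally, (3) follows from the Sobolev embedding: on a bounded convex (hence Lipschitz) domain $\Omega \subset \reals^2$ one has $H^2(\Omega) \hookrightarrow C^{0,\mu}(\overline{\Omega})$ for every $\mu<1$, so $\phi_h$ is continuous on the compact set $\overline{\Omega}$, hence bounded, and one may take $C_h = \|\phi_h\|_{C^0(\overline{\Omega})} \lesssim \|\phi_h\|_{H^2(\Omega)}$. No uniformity in $h$ is asserted --- indeed $\|\phi_h\|_{H^2(\Omega)}\sim h^{-2}$ --- and that is precisely why Theorem \ref{T:non-con} needs the delicate argument that follows rather than this soft bound. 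The main, really the only, obstacle above is the $H^2$-regularity at the corners: this is the content of Grisvard's theorem and the reason for assuming convexity, while everything else is standard interior and boundary elliptic bootstrapping together with Sobolev embedding.
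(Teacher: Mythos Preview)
Your argument is correct and follows exactly the route the paper indicates: Grisvard's $H^2$-regularity for the Neumann problem on convex planar domains with corners, combined with the two-dimensional Sobolev embedding $H^2(\Omega)\hookrightarrow C^0(\overline{\Omega})$, together with standard elliptic bootstrapping away from the corner set for part (2). The paper gives no further details beyond citing Grisvard and invoking Sobolev embedding, so your write-up is in fact more complete than what appears there.
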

The meaning of the third assertion is that the eigenfunctions do not blow up at corners, even though the ``constant'' $C_h$ may be very large as $h \to 0$.  We will use this when integrating by parts along the boundary of $\Omega$.

We will also need a Sobolev type estimate in shrinking neighbourhoods of corners.  
    \begin{lemma}
    \label{L:Sobolev}
Let 
    $\eta \geq h$ and suppose $\zeta(x)$ is a smooth function satisfying $\supp \zeta \subset \{ |x| \leq \eta \}$ satisfying $\p_x^k \zeta = \O(\eta^{-k})$.  Then 
    \[
    \int_0^\eta \zeta(x) | \phi_h|^2 ( x , \alpha_1(x)) dx = \O(h^{-1}) \int_{\Omega \cap \{ |x| \leq 3 \eta \}} | \phi_h|^2 dV.
    \]
    
    \end{lemma}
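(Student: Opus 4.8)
The plan is to prove a one-dimensional trace inequality on the boundary curve $y=\alpha_1(x)$ by integrating out the normal direction, paying a factor of $h^{-1}$ for the derivative that appears. First I would introduce a cutoff in the normal direction: let $\psi(y)$ be a smooth function equal to $1$ for $|y|$ small and supported in a region of size comparable to $\eta$ (using the scaled bump $\tpsi$ composed with a suitable dilation), chosen so that the ``box'' $\{|x|\le 3\eta\}\times\{\text{normal distance} \le c\eta\}$ sits inside $\Omega$ near the boundary piece $\Gamma_1$; convexity and the fact that $\eta\ge h$ (indeed we only need $\eta$ small and fixed, or shrinking slower than the curvature scale) guarantee this. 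Working in coordinates adapted to $\Gamma_1$ — either the graph coordinates $(x,y)$ with the boundary at $y=\alpha_1(x)$, or Fermi-type coordinates $(s,t)$ with $t$ the signed distance to $\Gamma_1$ — the boundary value $\phi_h(x,\alpha_1(x))$ can be recovered from an interior slice by the fundamental theorem of calculus along the inward normal:
\[
|\phi_h|^2(x,\alpha_1(x)) = |\phi_h|^2(x, \alpha_1(x)-T) + \int_0^T \p_t \big( |\phi_h|^2 \big)(x,\alpha_1(x)-t)\, dt
\]
for any depth $T$ up to order $\eta$; averaging this identity over $T\in[0,c\eta]$ and using $\p_t(|\phi_h|^2) = 2\,\Re(\overline{\phi_h}\,\p_t\phi_h)$ together with Cauchy–Schwarz gives
\[
|\phi_h|^2(x,\alpha_1(x)) \;\lesssim\; \frac{1}{\eta}\int_0^{c\eta} |\phi_h|^2\,dt \;+\; \int_0^{c\eta} |\p_t\phi_h|^2\,dt \;+\; \frac{1}{\eta}\cdot\eta\int_0^{c\eta}|\phi_h|^2\,dt .
\]
Now multiply by $\zeta(x)$, integrate $dx$ over $[0,\eta]$, and bound $\zeta = \O(1)$; the normal-direction integrals reassemble (up to the bounded Jacobian relating $(x,t)$ to Euclidean $dV$) into $\int_{\Omega\cap\{|x|\le 3\eta\}} |\phi_h|^2\,dV$ and $\int_{\Omega\cap\{|x|\le 3\eta\}} |\nabla\phi_h|^2\,dV$.

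Next I would dispose of the gradient term using the eigenvalue equation. On a region at scale $\eta\ge h$, a standard Caccioppoli/energy estimate for $-h^2\Delta\phi_h=\phi_h$ with the Neumann condition gives
\[
\int_{\Omega\cap\{|x|\le 3\eta\}} |\nabla\phi_h|^2\,dV \;\lesssim\; h^{-2}\int_{\Omega\cap\{|x|\le 4\eta\}} |\phi_h|^2\,dV ,
\]
obtained by multiplying the equation by $\theta^2\phi_h$ for a cutoff $\theta$ adapted to scale $\eta$, integrating by parts (the boundary term vanishes by $\p_\nu\phi_h=0$), and absorbing; the error from $\nabla\theta=\O(\eta^{-1})$ is lower order since $\eta\ge h$ forces $\eta^{-2}\lesssim h^{-2}$. (One shrinks the inner neighbourhood slightly to convert $\{|x|\le 4\eta\}$ back to $\{|x|\le 3\eta\}$, or simply allows the constant $3$ to be any fixed constant, which the statement tolerates.) Combining, the gradient term contributes $\O(h^{-2})\cdot\eta^2 \cdot \eta^{-1}\cdot\ldots$ — more carefully, the gradient integral already carries the full factor $h^{-1}$ we are allowed, and the $|\phi_h|^2$ terms are even smaller, so everything is dominated by $\O(h^{-1})\int_{\Omega\cap\{|x|\le 3\eta\}}|\phi_h|^2\,dV$, which is the claim.

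The main obstacle is handling the corner: the neighbourhood $\{|x|\le\eta\}$ abuts the corner point $p_0=(0,0)$ where $\phi_h$ is only known to be in $H^2$ (Lemma \ref{L:a-priori-bounded}) and not smooth, so the fundamental-theorem-of-calculus slicing and the integration by parts in the Caccioppoli estimate must be justified by an approximation argument — e.g. first prove the inequality with $\zeta$ replaced by $\zeta\cdot(1-\tpsi(x/\rho))$ for $\rho\to 0$, where everything is smooth, then let $\rho\to 0$ using $\phi_h\in H^2\subset C^0(\overline\Omega)$ (so boundary traces make sense and the dropped sliver $\{|x|\le\rho\}$ contributes $\O(\rho\,C_h^2)\to 0$ for fixed $h$), and note the constants in the estimate do not depend on $\rho$. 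A secondary point is that near the corner the single graph $y=\alpha_1(x)$ describes only one of the two boundary edges, and one must make sure the normal segments of length $c\eta$ emanating from $\Gamma_1$ stay inside $\Omega$ and do not cross $\Gamma_2$; since the two edges meet at a nonzero angle $\alpha_1$ fixed independent of $h$, choosing $c$ small (depending only on that angle) suffices, and the resulting box is contained in $\Omega\cap\{|x|\le 3\eta\}$ as required. The Dirichlet case is easier still: the boundary trace vanishes identically, so the lemma is trivial there, and one only invokes this version for Neumann.
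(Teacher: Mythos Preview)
Your overall strategy --- a boundary trace inequality plus a Caccioppoli estimate --- matches the paper's, but the geometric step fails at an acute corner and the patch you sketch does not repair it. The claim that ``choosing $c$ small (depending only on that angle) suffices'' for normal segments of length $c\eta$ from $\Gamma_1$ to remain in $\Omega$ is false when the interior angle is less than $\pi/2$: for a point on $\Gamma_1$ at distance $d$ from the corner, the inward normal ray meets $\Gamma_2$ at distance proportional to $d$, not to $\eta$. Hence for $d \ll \eta$ no fixed depth works, and your approximation (excising $\{|x|\le\rho\}$ and letting $\rho\to 0$) does not help either --- for $\rho$ below a fixed multiple of $\eta$ the same obstruction persists on the remaining interval, while shrinking the depth to order $\rho$ forces the trace constant to be of order $1/\rho$, which blows up as $\rho\to 0$. (Separately, your displayed trace bound as written would yield $\O(h^{-2})$ after Caccioppoli rather than $\O(h^{-1})$; this is easily repaired by applying Cauchy's inequality with parameter $h$ to the cross term $2\Re(\bar\phi_h\,\partial_t\phi_h)$, which you allude to but do not actually carry out.)

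The paper sidesteps the corner geometry by working in the \emph{tangential} direction rather than the normal one. It sets $E(x)=\int_0^{\alpha_1(x)}|\phi_h|^2\,dy$, so that Leibniz gives
\[
E'(x)=\alpha_1'(x)\,|\phi_h|^2(x,\alpha_1(x))+2\Re\int_0^{\alpha_1(x)}(\partial_x\phi_h)\bar\phi_h\,dy;
\]
multiplying by a cutoff $\tilde\zeta$ and integrating by parts in $x$ produces no endpoint contribution because $E(0)=0$ --- the vertical slice degenerates precisely at the corner. The remaining interior terms are then controlled by the same Caccioppoli estimate you invoke. In effect the paper exploits the degeneracy of the domain at the corner, whereas your fixed-depth normal slab is defeated by it.
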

    \begin{remark}
    The statement for the lower segment $y = \alpha_2(x)$ is similar.  
    Here the choice of $3 \eta$ is for convenience in the proof.  Any domain wider than $\eta$ will work as well.
    \end{remark}

    \begin{proof}
    The proof in the case of a smooth side follows from the more difficult proof in the case of a corner, so we will just prove the corner case.  In the coordinates above, we are interested in the boundary traces in a $\eta>0$ neighbourhood of $(0,0)$.  
%
Let 
    \[
    E(x) = \int_{0}^{\alpha_1} | \phi_h |^2 d y
    \]
and compute
\begin{align*}
E'(x) & = \alpha_1'(x)  |\phi_h |^2 ( x, \alpha_1(x)) + 2 \Re \int_{0}^{\alpha_1} (\p_x \phi_h) \bar{\phi}_h dy.
\end{align*}
From Lemma \ref{L:a-priori-bounded}, we know that $E(0) = 0$.  
Then if $\tzeta(x)$ is a smooth function such that $\tzeta(x) \equiv 0$ for $x \geq \eta$ and $\p_x^k \tzeta = \O(\eta^{-k})$,
we have
\begin{align}
\int_0^\eta \tzeta(x) E'(x) dx & = - \int_0^\eta \tzeta'(x) E(x) dx+ \tzeta(\eta) E(\eta) - \tzeta(0) E(0)  \notag \\
& = - \int_0^\eta \tzeta'(x) E(x) dx \notag \\
& = -\int_0^\eta \tzeta'(x) \int_{0}^{\alpha_1} | \phi_h |^2 dy dx  \notag\\
& = \O(\eta^{-1}) \int_0^\delta \int_{0}^{\alpha_1} |\phi_h |^2 dy dx. \label{E:E-prime-IBP}
\end{align}

On the other hand, using Cauchy's inequality with parameter, we have
\begin{align}
\int_0^\eta \tzeta(x) E'(x) dx & = \int_0^\eta \tzeta(x) (  \alpha_1'(x)  |\phi_h |^2 ( x, \alpha_1(x)) dx + 2 \Re \int_0^\eta \int_{0}^{\alpha_1}\tzeta(x)  (\p_x \phi_h) \bar{\phi}_h dy ) dx  \notag \\
& = \int_0^\eta \tzeta(x) \alpha_1'(x) | \phi_h|^2 dx + \O(1) \int_0^{\eta} \int_0^{\alpha_1} h^{-1} ( | h \p_x \phi_h|^2 + | \phi_h|^2 ) dy dx. \label{E:zeta-Cauchy}
\end{align}
   Rearranging \eqref{E:zeta-Cauchy}, we have
   \begin{align}
   \int_0^\eta \tzeta(x) \alpha_1'(x) | \phi_h|^2 (x, \alpha_1(x))dx & = \int_0^\eta \tzeta(x) E'(x) dx  + \O(h^{-1})  \int_0^{\eta} \int_0^{\alpha_1} ( | h \p_x \phi_h|^2 + | \phi_h|^2 ) dy dx \notag \\
   & = \O(h^{-1})  \int_0^{\eta} \int_0^{\alpha_1} ( | h \p_x \phi_h|^2 + | \phi_h|^2 ) dy dx 
      \label{E:zeta-bdy}
   \end{align}
   from \eqref{E:E-prime-IBP} since $\eta \geq h$.
   
   Now let $\Gamma(x)$ be a smooth bump function with $\Gamma(x) \equiv 1$ for $|x| \leq \eta$ with support in $\{ |x| \leq 2 \eta\}$ and $\p_x^k \Gamma = \O( \eta^{-k})$.  Let $\tilde{\Gamma}$ be a smooth bump function such that $\tilde{\Gamma} \equiv 1$ for $| x | \leq 2 \eta$ with support in $\{ | x | \leq 3 \eta \}$ and $\p_x^k \tilde{\Gamma} = \O( \eta^{-k})$. 
       Let 
    \[
    I = \int_\Omega \Gamma^2(x)  | h \p_x \phi_h|^2 dV 
    \] 
    so we have
   \begin{align}
    \int_0^{\eta} \int_0^{\alpha_1} &   | h \p_x \phi_h|^2 dy dx  \notag \\
    & \leq I \notag \\
    & \leq \int_\Omega \Gamma^2 (x) ( | h \p_x \phi_h|^2 + | h \p_x \phi_h |^2 ) dV \notag \\
    & = \int_\Omega \Gamma^2  (x) ( -h^2 \Delta \phi_h ) \bar{\phi}_h dV \notag \\
    & \quad - 2 \int_\Omega h \Gamma'(x) \Gamma(x) (h \p_x \phi_h) \bar{\phi}_h dV. \label{E:Gamma-prime}
    \end{align}
    The last term in \eqref{E:Gamma-prime} is estimated using Cauchy's inequality with small parameter $c>0$ independent of $h$ and $\eta$:
    \begin{align*}
    \Bigg| \int_\Omega & h \Gamma'(x) \Gamma(x) (h \p_x \phi_h) \bar{\phi}_h dV \Bigg| \\
    & \leq C \frac{h}{\eta} \int_\Omega \tilde{\Gamma}| \Gamma (x) h \p_x \phi_h|| \phi_h | dV \\ 
    & = C \int_\Omega | \Gamma h \p_x \phi_h | | \frac{h}{\eta} \tilde{\Gamma} \phi_h | dV \\ 
    & \leq C \int_\Omega (c \Gamma^2 | h \p_x \phi_h|^2 )dV + C \int_\Omega  (c^{-1} \tilde{\Gamma}^2 | \frac{h}{\eta} \phi_h|^2 )dV \\
    & \leq \half I + C\frac{h^2}{\eta^2}\int_\Omega \tilde{\Gamma}^2 | \phi_h|^2 dV
    \end{align*}
    for $c>0$ sufficiently small, but independent of $h$ and $\eta$.  
    This gives the estimate
    \begin{align*}
    I & \leq \int_\Omega \Gamma^2  (x) ( -h^2 \Delta \phi_h ) \bar{\phi}_h dV +  \half I + C\frac{h^2}{\eta^2}\int_\Omega \tilde{\Gamma}^2 | \phi_h|^2 dV \\
    & \leq C \int_\Omega \tilde{\Gamma}^2 | \phi_h |^2 dV + \half I,
    \end{align*}
    where we have used the eigenfunction equation and $h \leq \eta$.  
    Solving for $I$ gives
    \[
    I \leq C \int_\Omega \tilde{\Gamma}^2 | \phi_h|^2 dV.
    \]
    Plugging this estimate into the right hand side of \eqref{E:zeta-bdy}, we have
    \begin{align}
     \int_0^\delta \tzeta(x) \alpha_1'(x) | \phi_h|^2 dx & = \O(h^{-1})  \int_0^{\delta} \int_0^{\alpha_1} ( | h \p_x \phi_h|^2 + | \phi_h|^2 ) dy dx \notag \\
     & = \O(h^{-1}) \int_\Omega \tilde{\Gamma}^2 | \phi_h |^2 dV.
      \label{E:zeta-bdy2}
      \end{align}
%
%
   Finally, recall that $\alpha_1'(x)>0$ is bounded away from $0$ in a neighbourhood of $x = 0$, so that $\tzeta = \zeta/\alpha_1'$ is a smooth function satisfying the correct properties.  This completes the proof.

    \end{proof}


\subsection{The piecewise-smooth case: Proof of Theorem \ref{T:non-con}}
\begin{proof}[Proof of Theorem \ref{T:non-con}]

The proof will proceed by looking at smooth (not necessarily flat) boundary pieces away from corners
and at corners separately, although the proof for corners has much in
common with smooth sides.


 The proof has several steps.  First we establish the result for
$\delta = 1/2$.  The proof for $0 \leq \delta < 1/2$ is similar (and easier), so we
omit the details.  Then we use the $\delta = 1/2$ estimate to
bootstrap the $\delta = 2/3$ estimate.  Again, for $1/2 < \delta <
2/3$, the proof is the same as for $\delta = 2/3$ (but again easier).
Our final step is an induction to prove that for any integer $k>0$ the
result is true for $\delta = 1-1/3k$.


 \subsubsection{  Analysis away from corner points } 
    
    We first consider a boundary point $p_0$ which is on a smooth (not necessarily flat)
    component of the boundary 
  $\Gamma$ away from corners.  We continue to work in coordinates from Subsection \ref{SS:preliminaries}.
%
%
%
%
%

For $\epsilon>0$ sufficiently small but independent of $h$, let

\begin{equation}
  \chi(x,y) = \tchi ( x/h^{1/2}) \tpsi (x/\epsilon) \tpsi(
  y/\epsilon)\label{E:chidef}.
\end{equation}\

If $\epsilon >0$
is sufficiently small, we may assume that $\supp ( \chi |_{\p \Omega}) \subset
\Gamma$.  We have $\chi(x,y) = x/2h^{1/2}$ for $-h^{1/2} \leq x \leq
h^{1/2}$ and $-\epsilon \leq y \leq \epsilon$.  We use  the short hand
notation $\chi_x  := \partial_{x} \chi,  \chi_y := \partial_y \chi$, so    $\supp \chi_x$  consists of three connected components, one
near zero, one near $-\epsilon$, and one near $\epsilon$.  Note:
since $\tchi(x/h^{1/2})$ is constant for $ x  \leq -3 h^{1/2}$ and $x \geq
3h^{1/2}$, we have that  
$\chi_x$ depends on $h$ for $-3h^{1/2} \leq x \leq 3 h^{1/2}$, but on
the set 
$\{ | x | \geq \epsilon \}$, $\chi_x = \epsilon^{-1} \tchi ( x/h^{1/2}
) \tpsi' ( x/\epsilon)\tpsi(y/\epsilon) = \pm \epsilon^{-1} \tpsi' ( x/\epsilon)\tpsi(y/\epsilon)$ is independent of $h$.  This means that
\begin{equation}
  \label{E:chixsupp}
\chi_x(x,y) \geq  h^{-1/2} \gamma(x/h^{1/2}) \gamma(y/h^{1/2}) - \O(1)
\end{equation}
so that, in particular, $\chi_x \geq  h^{-1/2}/4$ on $B((0,0), h^{1/2})$.

In order to ease notation, let $r>0$ be a small parameter  not depending on $h$  such that
$r \gg \epsilon$ but a $r$ neighbourhood of $(0,0)$ still
does not meet any  corners.  This is just so that integrating in
$[-r ,r]^2 \cap \Omega$ includes the
full support of $\chi$ inside $\Omega$.   See Figure
\ref{F:smooth-side-chi} for a picture.

    \begin{figure}
\hfill
\centerline{
\begingroup%
  \makeatletter%
  \providecommand\color[2][]{%
    \errmessage{(Inkscape) Color is used for the text in Inkscape, but the package 'color.sty' is not loaded}%
    \renewcommand\color[2][]{}%
  }%
  \providecommand\transparent[1]{%
    \errmessage{(Inkscape) Transparency is used (non-zero) for the text in Inkscape, but the package 'transparent.sty' is not loaded}%
    \renewcommand\transparent[1]{}%
  }%
  \providecommand\rotatebox[2]{#2}%
  \newcommand*\fsize{\dimexpr\f@size pt\relax}%
  \newcommand*\lineheight[1]{\fontsize{\fsize}{#1\fsize}\selectfont}%
  \ifx\svgwidth\undefined%
    \setlength{\unitlength}{232.93729591bp}%
    \ifx\svgscale\undefined%
      \relax%
    \else%
      \setlength{\unitlength}{\unitlength * \real{\svgscale}}%
    \fi%
  \else%
    \setlength{\unitlength}{\svgwidth}%
  \fi%
  \global\let\svgwidth\undefined%
  \global\let\svgscale\undefined%
  \makeatother%
  \begin{picture}(1,0.76970696)%
    \lineheight{1}%
    \setlength\tabcolsep{0pt}%
    \put(0,0){\includegraphics[width=\unitlength,page=1]{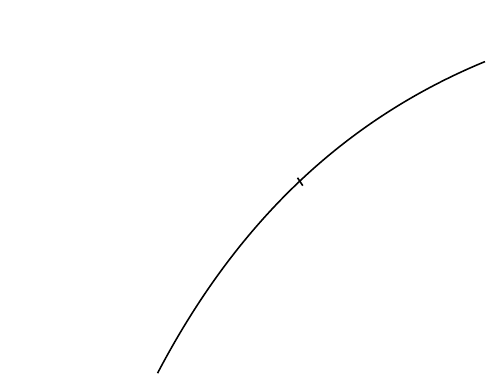}}%
    \put(0.53146402,0.41244644){\color[rgb]{0,0,0}\makebox(0,0)[lt]{\lineheight{1.25}\smash{\begin{tabular}[t]{l}$(0,0)$\end{tabular}}}}%
    \put(0.40044624,0.07339991){\color[rgb]{0,0,0}\makebox(0,0)[lt]{\lineheight{1.25}\smash{\begin{tabular}[t]{l}$y = \alpha(x)$\end{tabular}}}}%
    \put(0,0){\includegraphics[width=\unitlength,page=2]{smooth-side-chi.pdf}}%
    \put(0.1632925,0.73709022){\color[rgb]{0,0,0}\makebox(0,0)[lt]{\lineheight{1.25}\smash{\begin{tabular}[t]{l}$\chi \equiv \tchi(x/h^\half)$\end{tabular}}}}%
    \put(-0.00356352,0.17667156){\color[rgb]{0,0,0}\makebox(0,0)[lt]{\lineheight{1.25}\smash{\begin{tabular}[t]{l}$\supp(\chi)$\end{tabular}}}}%
  \end{picture}%
\endgroup%
}
\caption{\label{F:smooth-side-chi} $\Omega$ in a neighbourhood of a
  point on a smooth side and the function $\chi$.}
\hfill
\end{figure}

We will use a Rellich-type commutator argument, but terms that appear
``lower order'' have non-trivial dependence on $h$ and are not really
lower order.  We have
\[
[-h^2 \Delta , \chi \p_x ] = -2 \chi_x h^2 \p_x^2 - h \chi_{xx} h \p_x
- 2 \chi_y h \p_x h \p_y - h\chi_{yy} h \p_x,
\]
so integrating over $\Omega$ we have
\begin{align}
  \int_{\Omega} & ([-h^2 \Delta -1, \chi \p_x ] \phi_h ) \phi_h dV
  \notag \\
  & = \int_{\Omega} ((-2 \chi_x h^2 \p_x^2 - h \chi_{xx} h \p_x
  - 2 \chi_y h \p_x h \p_y - h\chi_{yy} h \p_x) \phi_h ) \phi_h dV.
  \label{E:first-comm}
\end{align}

We recall the standard identity for first order derivatives:
\[
\int_\Omega | h \nabla \phi_h |^2 dV = \int_\Omega (-h^2 \Delta \phi_h
) \phi_h dV = 1,
\]
which we use to estimate the lower order terms.  
Since $\chi_{xx} = \O(h^{-1})$ and $\chi_y$ and $\chi_{yy}$ are
bounded independent of $h$, we have
\[
\left| \int_\Omega (h \chi_{xx} h \p_x \phi_h ) \phi_h dV \right| \leq C \int ( | h \p_x \phi_h|^2 + | \phi_h|^2) dV = \O(1),
\]
and similarly
\[
\left| \int_\Omega (h \chi_{yy} h \p_y \phi_h ) \phi_h dV \right| = \O(1).
\]
For the mixed derivative term, 
we have
\begin{align*}
  \int_\Omega&  \chi_y (h \p_x h \p_y \phi_h ) \phi_h dV \\
  & = \int_{-r}^r \int_{-r}^{\alpha(x)} \chi_y (h \p_y h \p_x \phi_h )
  \phi_h dy dx\\
  & = - \int_{-r}^r \int_{-r}^{\alpha(x)}( h \p_x \phi_h) (h \chi_{yy}
  \phi_h + \chi_y h \p_y \phi_h ) dy dx \\
  & \quad + \int_{-r}^r h \chi_y ( h \p_x \phi_h) \phi_h
  |_{-r}^{\alpha(x)} dx.
\end{align*}
For the boundary term, the support properties of $\chi$ means
\[
\int_{-r}^r h \chi_y ( h \p_x \phi_h) \phi_h
  |_{-r}^{\alpha(x)} dx = \int_{-r}^r h \chi_y ( h \p_x \phi_h) \phi_h
  (x,{\alpha(x)}) dx.
  \]
  Along the face $y = \alpha(x)$, we have $h \p_x \phi_h = \kappa^{-1}
  h \p_\tau \phi_h$, so, in tangent coordinates,
  \begin{align}
  \int_{-r}^r & h \chi_y ( h \p_x \phi_h) \phi_h
  (x,{\alpha(x)}) dx \notag \\
&  = \int h \chi_y \kappa^{-1} (h \p_\tau \phi_h)
  \phi_h dS \notag \\
  & = \frac{h^2}{2} \int \chi_y \kappa^{-1} \p_\tau | \phi_h|^2 dS
  \notag \\
  & = - \frac{h^2}{2} \int (\p_\tau \chi_y \kappa^{-1}) | \phi_h |^2
  dS . \label{E:bdy-h-Sobolev-1}
  \end{align}
  The function $\p_\tau \chi_y \kappa^{-1} = \O(h^{-1/2})$, so, using
  the standard $h$-Sobolev estimates,
  \begin{align}
& \left| \frac{h^2}{2} \int (\p_\tau \chi_y \kappa^{-1}) | \phi_h |^2
    dS \right| \notag \\
    & \quad = \O(h^{1/2}) \int_\Omega (| h \nabla \phi_h |^2 + | \phi_h |^2 dV)
    \notag \\
    & \quad = \O(h^{1/2}). \label{E:bdy-h-Sobolev-2}
  \end{align}
  This implies
  \[
  \int_\Omega  \chi_y (h \p_x h \p_y \phi_h ) \phi_h dV = \O(1),
  \]
so that  \eqref{E:first-comm} becomes
  \begin{align*}
      \int_{\Omega} & ([-h^2 \Delta -1, \chi \p_x ] \phi_h ) \phi_h dV
  \\
   & = -2 \int_\Omega (\chi_x h^2 \p_x^2 \phi_h ) \phi_h dV + \O(1).
  \end{align*}

Now inside $\Omega$,  $\supp \chi_x \subset \{ (x,y); \beta(y) < x < r,  |y| < r \},$  by an integration by parts, 
\begin{align}
  -2& \int_\Omega (\chi_x h^2 \p_x^2 \phi_h ) \phi_h dV \notag \\
  & = -2 \int_{-r}^r \int_{\beta(y)}^r (\chi_x h^2 \p_x^2
  \phi_h ) \phi_h dx dy \notag \\
  & = 2\int_{-r}^r \int_{\beta(y)}^r (\chi_x h \p_x
  \phi_h ) h \p_x \phi_h dx dy \notag \\
  & \quad +  2\int_{-r}^r \int_{\beta(y)}^r h (\chi_{xx} h \p_x
  \phi_h )  \phi_h dx dy \notag \\
  & \quad - 2 \int_{-r}^r \left( (h\chi_x h \p_x \phi_h)
  \phi_h\right) |^r_{\beta(y)} dy \notag \\
  & = 2\int_{-r}^r \int_{\beta(y)}^r \chi_x |h \p_x
  \phi_h|^2 dx dy\notag  \\
  & \quad - 2 \int_{-r}^r \left( h(\chi_x h \p_x \phi_h)
  \phi_h\right) |_{\beta(y)}^r dy + \O(1), \label{E:boundaryI1}
\end{align}
where we have again used that $\chi_{xx} = \O(h^{-1})$.
Unfortunately, as $\chi_x = \O(h^{-1/2})$, the boundary term is not
necessarily bounded  in the Neumann case. 

 However, we will see that the largest part cancels with
a similar boundary term when we run a similar argument for a vector field in
the $\p_y$ direction.  Let
\[
I_1 = - 2 \int_{-r}^r \left( (h\chi_x h \p_x \phi_h)
\phi_h\right) |_{x=\beta(y)}^{x=r} dy
\]
be the boundary term from \eqref{E:boundaryI1}.  Using the support
properties of $\chi_x$, we have $\chi_x ( r , y) = 0$, so that
\[
I_1  = 2 \int_{-r}^r \left( (h\chi_x h \p_x \phi_h)
\phi_h\right) (\beta(y), y) dy.
\]
We now change variables $y = \alpha(x)$ so that
\begin{equation}
  \label{E:bdyx1}
I_1 = 2 \int_{-r}^r \left( (  h  \chi_x h \p_x \phi_h)
\phi_h\right) (x, \alpha(x)) \alpha' dx.
\end{equation}
We will return to this shortly.

Consider now the function
\begin{equation} \label{rho}
\rho(x,y) := \alpha'(x) \tchi ( \beta(y)/ h^{1/2}) \tpsi (x/\epsilon) \tpsi(
y/\epsilon). \end{equation}
We have
\[
[-h^2 \Delta -1, \rho \p_y ] = -2 \rho_y h^2 \p_y^2 - h \rho_{yy} h
\p_y - 2 \rho_{x} h \p_y h \p_x - h\rho_{xx} h \p_y.
\]
Again, since $\rho_{yy} = \O(h^{-1})$ and $\rho_x$ and $\rho_{xx}$ are
bounded, we have
\[
\int_\Omega ([-h^2 \Delta -1 , \rho \p_y] \phi_h ) \phi_h dV = -2
\int_\Omega ( \rho_y h^2 \p_y^2 \phi_h) \phi_h dV + \O(1).
\]
We again integrate by parts, but  now  in the $y$ direction.  We have
\begin{align}
  -2 &
  \int_\Omega ( \rho_y h^2 \p_y^2 \phi_h) \phi_h dV \notag \\
  & = -2 \int_{-r}^r \int_{-r}^{\alpha(x)} (\rho_y h^2
  \p_y^2 \phi_h ) \phi_h dy dx \notag \\
  & = 2\int_{-r}^r \int_{-r}^{\alpha(x)} \rho_y| h
  \p_y \phi_h |^2 dy dx \notag \\
  & \quad + 2
\int_{-r}^r \int_{-r}^{\alpha(x)} (h \rho_{yy} h
\p_y \phi_h ) \phi_h dy dx
\notag \\
& \quad -2 \int_{-r}^r \left( (h \rho_y h \p_y \phi_h)
\phi_h \right) |_{-r}^{\alpha(x)}dx \notag \\
& = 2\int_{-r}^r \int_{-r}^{\alpha(x)} (\rho_y h
  \p_y \phi_h ) h \p_y \phi_h dy dx \notag \\
& \quad -2 \int_{-r}^r \left( (h \rho_y h \p_y \phi_h)
  \phi_h \right) (x,\alpha(x)) dx + \O(1). \label{E:boundaryI2}
\end{align}
Here we have again used that $\rho_{yy} = \O(h^{-1})$ and that
$\rho_y(x , -r) = 0$.

Now let
\begin{equation}
  \label{E:I2-1}
I_2 = -2 \int_{-r}^r \left( (h \rho_y h \p_y \phi_h)
\phi_h \right) (x,\alpha(x)) dx
\end{equation}
be the boundary term from \eqref{E:boundaryI2}.  We observe that
\begin{align*}
  \rho_y & = \alpha'(x) \beta'(y) h^{-1/2} \tchi'(\beta(y)/h^{1/2})
  \tpsi(x/\epsilon) \tpsi(y/\epsilon) + \O(1).
\end{align*}
In \eqref{E:boundaryI2}, we are evaluating at $y = \alpha(x)$, so we
get 
\begin{align}
\rho_y ( x, \alpha(x)) & =   \alpha'(x) \beta'(\alpha(x)) h^{-1/2} \tchi'(x/h^{1/2})
\tpsi(x/\epsilon) \tpsi(\alpha(x)/\epsilon) + \O(1) \notag \\
& = \chi_x (x , \alpha(x)) + \O(1) \label{E:rhoy-chix}
\end{align}
with $\chi$ as in \eqref{E:chidef}.  Substituting into
\eqref{E:I2-1}, we have
\[
I_2 = 
-2 \int_{-r}^r \left( (h \chi_x h \p_y \phi_h)
\phi_h \right) (x,\alpha(x)) dx + \O(1).
\]

We now use the Neumann boundary conditions.  We have 
\begin{align}   0 & = \p_\nu \phi_h(x , \alpha(x)) &
  \notag \\
  & = -\frac{\alpha'}{\kappa} \p_x \phi_h ( x, \alpha(x)) +
  \frac{1}{\kappa} \p_y \phi_h(x, \alpha(x)) \label{E:px-py}
\end{align}
so that $\alpha' \p_x \phi_h (x, \alpha(x)) = \p_y \phi_h ( x ,
\alpha(x)).$  Substituting into \eqref{E:bdyx1}, we have

\begin{align} \label{display1}
  I_1 + I_2 & = \nonumber
2 \int_{-r}^r \left( (h\chi_x h \p_x \phi_h)
\phi_h\right) (x, \alpha(x)) \alpha' dx \\ \nonumber
& \quad 
-2 \int_{-r}^r \left( (h \rho_y h \p_y \phi_h)
\phi_h \right) (x,\alpha(x)) dx  \\ \nonumber
& = 
2 \int_{-r}^r \left( (  h \chi_x h \p_x \phi_h)
\phi_h\right) (x, \alpha(x)) \alpha' dx \\ \nonumber
& \quad 
-2 \int_{-r}^r \left( (h \chi_x h \p_y \phi_h)
\phi_h \right) (x,\alpha(x)) dx + \O(1) \\
& = \O(1).
\end{align}

Summing \eqref{E:boundaryI1} and \eqref{E:boundaryI2} we have
\begin{align*}
  \int_\Omega & ([-h^2 \Delta -1, \chi \p_x ] \phi_h) \phi_h dV 
  \\
  & \quad + \int_\Omega  ([-h^2 \Delta -1, \rho \p_y ] \phi_h) \phi_h dV \\
  & = 2\int_\Omega \chi_x | h \p_x \phi_h |^2 dV + 2 \int_\Omega
   \rho_y | h \p_y \phi_h |^2 dV + \O(1).
  \end{align*}
From \eqref{E:chixsupp} we have 
\[
\chi_x \geq  h^{-1/2} \gamma(x/h^{1/2}) \gamma(y/h^{1/2}) -
\O(1),
\]
and similarly there is a constant $c_0 >0$ independent of $h$ such that
\[
\rho_y \geq   c_0 h^{-1/2} \gamma(x/h^{1/2}) \gamma(y/h^{1/2}) -
\O(1).
\]
Let $c_1 = \min (1, c_0)$ so that	
\begin{align} 
    \int_\Omega & ([-h^2 \Delta -1, \chi \p_x ] \phi_h) \phi_h dV 
  \notag \\
  & \quad + \int_\Omega  ([-h^2 \Delta -1, \rho \p_y ] \phi_h) \phi_h
  dV \notag  \\
  & \geq c_1 \int_\Omega  h^{-1/2} \gamma(x/h^{1/2})
  \gamma(y/h^{1/2}) ( | h \p_x \phi_h|^2 + | h \p_y \phi_h |^2) dV -
  \O(1) \notag \\
  & = c_1 \int_\Omega  h^{-1/2} \gamma(x/h^{1/2})
  \gamma(y/h^{1/2}) ( - h^2 \p_x^2 \phi_h - h^2 \p_y^2 \phi_h )\phi_h
  dV \notag \\
  & \quad + c_1 h\int_{\p \Omega} h^{-1/2} \gamma(x/h^{1/2})
  \gamma(y/h^{1/2}) (h \p_\nu \phi_h) \phi_h dS
  -
  \O(1) 
 \notag  \\
  & = c_1 \int_\Omega  h^{-1/2} \gamma(x/h^{1/2})
  \gamma(y/h^{1/2}) | \phi_h|^2 
  dV - \O(1), \label{E:comm-sum-1}
\end{align}
where,  in the last line of  (\ref{E:comm-sum-1}),  we have used the eigenfunction equation and the Neumann boundary
conditions.  Since
\begin{equation*}
c_1 \int_\Omega  h^{-1/2} \gamma(x/h^{1/2})
  \gamma(y/h^{1/2}) | \phi_h|^2 
  dV \geq \frac{c_1}{4} \int_{B(p_0 , h^{1/2})}  h^{-1/2}  | \phi_h|^2 
  dV,
  \end{equation*}
  we have
  \begin{align}
    \frac{c_1}{4} \int_{B(p_0 , h^{1/2})}  h^{-1/2}  | \phi_h|^2 
    dV
    & \leq 
\int_\Omega  ([-h^2 \Delta -1, \chi \p_x ] \phi_h) \phi_h dV 
  \notag \\
  & \quad + \int_\Omega  ([-h^2 \Delta -1, \rho \p_y ] \phi_h) \phi_h dV
  + \O(1). \label{E:Oh-half}
  \end{align}

  On the other hand, expanding the commutator, using the eigenfunction equation, and integrating by parts, we have
  \begin{align}
    \int_\Omega  & ([-h^2 \Delta -1, \chi \p_x ] \phi_h) \phi_h dV
    \notag \\
    & = \int_\Omega ((-h^2 \Delta -1) \chi \p_x \phi_h) \phi_h dV  -
    \int_\Omega (\chi \p_x (-h^2 \Delta -1)  \phi_h) \phi_h dV \notag \\
    & = \int_\Omega (\chi \p_x \phi_h ) ((-h^2 \Delta -1) \phi_h) dV
     - \int_{\p \Omega} (h \p_\nu \chi h \p_x \phi_h ) \phi_h dS
    \notag \\
    & \quad + \int_{\p \Omega} ( \chi h \p_x \phi_h) ( h \p_\nu
    \phi_h) dS \notag \\
    & = - \int_{\p \Omega} (h \p_\nu \chi h \p_x \phi_h ) \phi_h dS.\label{E:comm-exp-111}
  \end{align}
  Using \eqref{E:nuxy-1}, \eqref{E:nuxy-2}, and the Neumann boundary
  conditions, we have
  \begin{align}
   h \p_\nu \chi h \p_x \phi_h & = \left( -\frac{\alpha'}{\kappa} h\p_x
   + \frac{1}{\kappa} h\p_y \right) \chi h \p_x \phi_h \notag \\
   & = \left( - \frac{ \alpha'}{\kappa} h \chi_x + \frac{1}{\kappa} h
   \chi_y \right)h \p_x \phi_h  \notag \\
   & \quad + \chi h \p_\nu h \p_x \phi_h \notag \\
   & = \left( - \frac{ \alpha'}{\kappa} h \chi_x + \frac{1}{\kappa} h
   \chi_y \right)\left( \frac{1}{\kappa}\right) h \p_\tau \phi_h\notag
   \\
   & \quad - \chi \frac{ \alpha'}{\kappa} h^2 \p_\nu^2 \phi_h + \O(h)
   h \p_\tau \phi_h \notag \\
   & = - \frac{ \alpha'}{\kappa^2} h \chi_xh \p_\tau \phi_h- \chi \frac{
     \alpha'}{\kappa} h^2 \p_\nu^2 \phi_h + \O(h) h \p_\tau \phi_h. \label{E:boundary-102}
  \end{align}
  \begin{remark}
Here is where we see that dealing with the boundary terms for the
Neumann eigenfunctions is 
significantly more difficult than in the case of Dirichlet
eigenfunctions.  Indeed, in the easier case of Dirichlet eigenfunctions, since $\Omega$ is convex, $\int_{\p \Omega} |
h \p_\nu \phi_h |^2 dS$ is bounded and the integrand has a sign. 

    \end{remark}

  Plugging
\eqref{E:boundary-102} into \eqref{E:comm-exp-111}    and using
integration by parts and Sobolev embedding for the $O(h)$ terms
as we did in \eqref{E:bdy-h-Sobolev-1}-\eqref{E:bdy-h-Sobolev-2},
 we have
  \begin{align*}
       \int_\Omega  & ([-h^2 \Delta -1, \chi \p_x ] \phi_h) \phi_h dV
       \\
       & = \int_{\p \Omega} \left(\chi \frac{
         \alpha'}{\kappa} h^2 \p_\nu^2 \phi_h \right) \phi_h dS \\
       & \quad + \int_{\p \Omega} \left( \frac{ \alpha'}{\kappa^2} h
       \chi_xh \p_\tau \phi_h \right) \phi_h dS + \O(1).
  \end{align*}

  A similar computation gives
  \begin{align*}
    \int_\Omega & ([-h^2 \Delta -1, \rho \p_y ] \phi_h ) \phi_h dV \\
    & = - \int_{\p \Omega} \left( \rho \frac{1}{\kappa} h^2 \p_\nu^2
    \phi_h \right) \phi_h dS \\
    & \quad - \int_{\p \Omega} \left( \frac{\alpha'}{\kappa^2} h\rho_y h
    \p_\tau \phi_h \right)\phi_h dS  + \O(1).
  \end{align*}
  Recalling that
  \[
  \rho(x, \alpha(x)) = \alpha' \chi(x, \alpha(x))
  \]
  and
  \[
  \rho_y(x, \alpha(x)) = \chi_x (x , \alpha(x)) + \O(1),
  \]
  we sum:
  \begin{align}
    \int_\Omega  & ([-h^2 \Delta -1, \chi \p_x ] \phi_h) \phi_h dV
    \notag \\
    & \quad + \int_\Omega ([-h^2 \Delta -1, \rho \p_y ] \phi_h )
    \phi_h dV \notag \\
    & = \int_{\p \Omega} \left(\chi \frac{
         \alpha'}{\kappa} h^2 \p_\nu^2 \phi_h \right) \phi_h dS
    \notag \\
       & \quad + \int_{\p \Omega} \left( \frac{ \alpha'}{\kappa^2} h
    \chi_xh \p_\tau \phi_h \right) \phi_h dS \notag \\
    & \quad - \int_{\p \Omega} \left( \rho \frac{1}{\kappa} h^2 \p_\nu^2
    \phi_h \right) \phi_h dS \notag \\
    & \quad - \int_{\p \Omega} \left( \frac{\alpha'}{\kappa^2} h\rho_y h
    \p_\tau \phi_h \right)\phi_h dS  + \O(1) \notag \\
    & = \O(1), \label{E:O1}
  \end{align}
  since the displayed terms on the RHS of (\ref{E:O1}) all cancel.

  Finally, equating \eqref{E:Oh-half} with \eqref{E:O1}, we get
  \[
  \int_{B(p_0 , h^{1/2})}  h^{-1/2}  | \phi_h|^2 
    dV = \O(1)
    \]
    as asserted.

\subsubsection{   Analysis near corner points }  We now consider the case where $p_0$ is a corner.

For $\epsilon>0$ sufficiently small, let $\chi(x,y)$ be the same as in
\eqref{E:chidef}.  We again use a parameter $r \gg \epsilon$ but
sufficiently small that $[-r, r]^2$ does not meet any other
corners.  Again, this is just to ease notation in our integral
expressions.  
Applying the same commutator argument as in the smooth boundary segment case, the
interior computations are the same, we just need to check what happens
on the boundary.  The key difference from the  case with no corners is
that boundary integrals have to be considered piecewise.  See Figure
\ref{F:corner-chi} for a picture of the setup.

   \begin{figure}
\hfill
\centerline{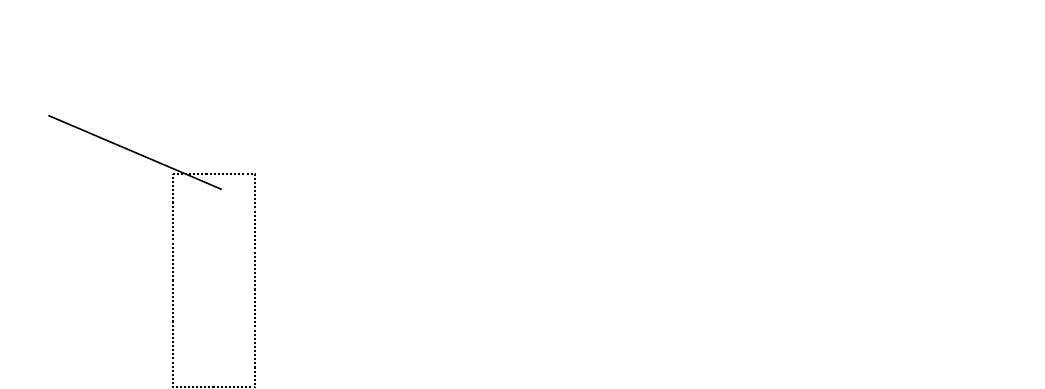}
\caption{\label{F:corner-chi} $\Omega$ in a neighbourhood of a corner
  and the functions $\chi$, $\rho_1$, and $\rho_2$.}
\hfill
\end{figure}

Integrating
by parts in the $x$ direction on the interior terms, we have:
\begin{align*}
 \int_\Omega  ([-h^2 \Delta -1, \chi \p_x ] \phi_h ) \phi_h dV & =  -2
 \int_\Omega  \chi_x (h^2 \p_x^2 \phi_h ) \phi_h dV + \O(1) \\
& = -2 \int_{-r}^0 \int_{x = \beta_2(y)}^r \chi_x (h^2 \p_x^2
  \phi_h ) \phi_h dx dy \\
  & \quad  -2 \int_0^r \int_{x = \beta_1(y)}^r \chi_x (h^2 \p_x^2
  \phi_h ) \phi_h dx dy  + \O(1) \\
  & = : I_1 + I_2 + \O(1).
  \end{align*}
Let us examine $I_1$ first:
\begin{align*}
  I_1 & = 2 \int_{-r}^0 \int_{\beta_2(y)}^r h \chi_{xx} (h
  \p_x \phi_h ) \phi_h dx dy + 2 \int_{-r}^0 \int_{\beta_2(y)}^r  \chi_{x} (|h
  \p_x \phi_h |^2 )dx dy \\
  & \quad -2 \int_{y = -r}^0 h \chi_x (h \p_x \phi_h ) \phi_h
  |_{x = \beta_2(y)}^{x = r} dy \\
  & = 
  2 \int_{-r}^0 \int_{\beta_2(y)}^r  \chi_{x} |h
  \p_x \phi_h |^2 dx dy \\
  & \quad +2 \int_{y = -r}^0 (h \chi_x (h \p_x \phi_h )
  \phi_h)(\beta_2(y), y) dy + \O(1),
\end{align*}
since $\chi$ has support in $x \leq 2 \epsilon \ll r$, and $h
\chi_{xx} = \O(1)$.

Similarly,
\begin{align*}
I_2 & = 
  2 \int_0^r \int_{ \beta_1(y)}^r  \chi_{x} (|h
  \p_x \phi_h |^2 dx dy \\
  & \quad +2 \int_0^r (h \chi_x (h \p_x \phi_h )
  \phi_h)(\beta_1(y), y) dy + \O(1).
\end{align*}
Summing, we have
\begin{align*}
  \int_{\Omega} & ([-h^2 \Delta -1, \chi \p_x ] \phi_h ) \phi_h dV \\
  & = I_1 + I_2 + \O(1)  \\
  & = 
  2  \int_\Omega \chi_{x} |h
  \p_x \phi_h |^2 dx dy 
  \\
  & \quad +2 \int_{y = -r}^0 (h \chi_x (h \p_x \phi_h )
  \phi_h)(\beta_2(y), y) dy \\
  & \quad +2 \int_{y = 0}^r (h \chi_x (h \p_x \phi_h )
  \phi_h)(\beta_1(y), y) dy + \O(1).
  \end{align*}
For the two boundary terms, we change variables $y = \alpha_2(x)$ and $y
= \alpha_1(x)$ respectively to get
\begin{align}
   \int_{\Omega} & ([-h^2 \Delta -1, \chi \p_x ] \phi_h ) \phi_h dV
   \notag \\
& = 
  2  \int_\Omega \chi_{x} (|h
  \p_x \phi_h |^2 dx dy -2  \int_0^r  \alpha_2'(x)(h \chi_x (h \p_x \phi_h )
  \phi_h)(x, \alpha_2(x)) dx \notag \\
  & \quad +2  \int_0^r \alpha_1'(x) (h \chi_x (h \p_x \phi_h )
  \phi_h)(x , \alpha_1(x)) dx + \O(1). \label{E:corner-bdy-x-1}
  \end{align}
Note the sign change on the second integral to correct for reversed
orientation in the $x$ direction.

We now want to employ a  similar argument with $\p_y$.  However, our
function $\rho$ cannot be globally defined if we want to write $\rho$
in terms of $\chi$ on the boundary, since we are not assuming any
relation between $\alpha_1$ and $\alpha_2$.  Let
\[
\Omega_1 = \Omega \cap \{ x \leq r \} \cap \{ y \geq 0 \},
\]
and
\[
\Omega_2 = \Omega \cap \{ x \leq r \} \cap \{ y \leq 0 \}
\]
be the top and bottom parts of $\Omega$ near $(0,0)$.  For $j = 1,2$, let

%

\begin{equation} \label{rhoj}
\rho_j(x,y) := \alpha_j'(xy/\alpha_j(x)) \tchi( \beta_j(y) /h^\half) \tpsi
(x/\epsilon) \tpsi (y/ \epsilon), \quad j=1,2.
\end{equation}\

See Figure \ref{F:corner-chi} for a picture of the setup.  The choice of argument $xy/\alpha_j(x)$ in \eqref{rhoj} is for some cancellation at $y = \alpha_j(x)$ and $y =0$.  The way we have chosen coordinates ensures that $x/\alpha_j(x)$ is smooth and bounded in our domain.

Let us record some facts about the $\rho_j$'s.  First, along $y =
\alpha_j$, $j = 1,2$, we have
\begin{equation}
  \label{E:rho-y-j}
\rho_j ( x , \alpha_j(x)) = \alpha_j'(x) \tchi(x/h^\half)
\tpsi(x/\epsilon) \tpsi(\alpha_j(x)/\epsilon) = \alpha_j' \chi(x,
\alpha_j(x)).
\end{equation}
Along $y = 0$, $\rho_j = 0$, since $\tchi$ is an odd function and
$\beta_j(0) = 0$ for $j = 1,2$.  Along
$y = \alpha_j$,
\begin{align*}
\p_y \rho_j(x, \alpha_j(x)) & = h^{-\half} \alpha_j' (x)
\beta_j'(\alpha_j(x)) \tchi'(x/h^\half)
\tpsi(x/\epsilon)\tpsi(\alpha_j(x)/\epsilon) + A_j + \O(1) \\
& =  h^{-\half} \tchi'(x/h^\half)
\tpsi(x/\epsilon)\tpsi(\alpha_j(x)/\epsilon) + A_j + \O(1) \\
& = \p_x \chi(x, \alpha_j(x)) + A_j + \O(1).
\end{align*}
Here $A_j$ is the term we get when the derivative lands on the $\alpha_j'(xy/\alpha_j(x))$:
\[
A_j 
= (x/\alpha_j(x)) \alpha_j''(xy/\alpha_j(x)) \tchi( \beta_j(y) /h^\half) \tpsi
(x/\epsilon) \tpsi (y/ \epsilon).
\]
Along $y = \alpha_j$, this reduces to
\[
A_j =  (x/\alpha_j(x)) \alpha_j''(x) \tchi( x/h^\half) \tpsi
(x/\epsilon) \tpsi (\alpha_j(x)/ \epsilon).,
\]
and along $y = 0$, $A_j = 0$.

\begin{remark}
\label{R:corner-vanish}
We single out the behaviour of $A_j$ because it is still singular due to the $\tchi(\beta_j/h^\half)$.
Indeed, we have
$
A_j = \tchi( x/h^\half) A_{j1}$
where
\[
\p_x^k A_{j1} = \O (1).
\]
Here the implicit $\O(1)$ errors come from differentiating the $\tpsi$ functions and are supported away from the corner.  We know  $\tchi (\beta_j(y)/h^\half)$ vanishes at $x = y = 0$, and we will use these two observations to integrate by parts along the boundary.  
\end{remark}

Finally, along $y =
0$, we have $\tchi'(0) = 1/2$ and $\tpsi'(0) = 0$, so that
\begin{align}
  \p_y \rho_j(x,0) &= h^{-\half}  \tchi'(0) \tpsi(x/\epsilon) \tpsi(0) + \O(1)\\
  & = (h^{-1/2}/2) \tpsi(x/\epsilon) + \O(1).\label{E:rho-y-0}
  \end{align}


Now consider the vector field $\rho_1 \p_y$ on $\Omega_1$.   The same commutator computation and integrations by
parts in $y$ yields the following:
\begin{align}
  \int_{\Omega_1} & ([-h^2 \Delta -1, \rho_1 \p_y ] \phi_h ) \phi_h
  dV \notag \\
  & = 
  -2 \int_{\Omega_1} \rho_{1,y} (h^2 \p_y^2 \phi_h ) \phi_h dV  + \O(1) \notag \\
  & = -2 \int_{x = 0}^r \int_{y =0 }^{y = \alpha_1(x)}
  \rho_{1,y} (h^2 \p_y^2 \phi_h ) \phi_h dy dx + \O(1)\notag \\
  & = 
  2 \int_{x = 0}^r \int_{y = 0}^{y = \alpha_1(x)}
  \rho_{1,y} |h \p_y \phi_h |^2 dy dx \notag  - 2 \int_{x = 0}^r h \rho_{1,y} (h \p_y \phi_h ) \phi_h
  |_{y = 0}^{y = \alpha_1(x)} dx + \O(1) \notag \\
  & = 
  2 \int_{\Omega_1}
  \rho_{1,y} |h \p_y \phi_h |^2 dy dx  - 2 \int_{x = 0}^r (h \rho_{1,y} (h \p_y \phi_h ) \phi_h)(x,
  \alpha_1(x)) dx \notag
  \\
  & \quad + 2 \int_{x = 0}^r (h \rho_{1,y} (h \p_y \phi_h ) \phi_h)(x,
  0) dx + \O(1) \notag \\
  & = 2 \int_{\Omega_1}
  \rho_{1,y} |h \p_y \phi_h |^2 dy dx \notag \\
  & \quad - 2 \int_{x = 0}^r (h (\chi_x+A_j) (h \p_y \phi_h ) \phi_h)(x,
  \alpha_1(x)) dx \notag 
  \\
  & \quad + 2 \int_{x = 0}^r(h^{-1/2}/2) \tpsi(x/\epsilon) (h  (h \p_y \phi_h ) \phi_h)(x,
  0) dx + \O(1). \label{E:corner-bdy-y-1001} 
  \end{align}
  
  In order to estimate the term with $A_j$ in \eqref{E:corner-bdy-y-1001}, first rewrite the integral in tangent coordinates $\tau$: 
  \begin{align*}
  \int_{x = 0}^r (h A_j (h \p_y \phi_h ) \phi_h)(x,
  \alpha_1(x)) dx  = \int_{\tau = 0}^r h \tA_j(\tau) (h \p_\tau \phi_j) \bar{\phi}_h d \tau 
 \end{align*}
 where $\tA_j$ is $A_j$ written in tangent coordinates together with the arclength factors after changing variables.  The precise form is not important, just that it satisfies the same order of estimates as $A_j$. 
 Continuing, using Remark \ref{R:corner-vanish}:
  \begin{align}
  \int_{\tau = 0}^r  h \tA_j(\tau) (h \p_\tau \phi_h) \bar{\phi}_j d \tau & = \frac{h^2}{2} \int_0^r \tA_j \p_\tau |\phi_h|^2 d \tau \notag \\
  & = -\frac{h^2}{2} \int_0^r (\p_\tau \tA_j) | \phi_h|^2 d \tau \notag \\ 
  & = \O(h^{1/2}) \label{E:Aj-control}
  \end{align}
  from Lemma \ref{L:Sobolev} with $\eta>0$ small but independent of $h$.
  Inserting this estimate into \eqref{E:corner-bdy-y-1001}, we have
  \begin{align}
   \int_{\Omega_1} & ([-h^2 \Delta -1, \rho_1 \p_y ] \phi_h ) \phi_h
  dV \\
& = 2 \int_{\Omega_1}
  \rho_{1,y} |h \p_y \phi_h |^2 dV  - 2 \int_{x = 0}^r (h \chi_x (h \p_y \phi_h ) \phi_h)(x,
  \alpha_1(x)) dx \notag \\
  & \quad + h^\half\int_{x = 0}^r \tpsi(x/\epsilon)   (h \p_y \phi_h ) \phi_h(x,
  0) dx + \O(1)
  . \label{E:corner-bdy-y-1}
\end{align}
%
%

A similar computation on $\Omega_2$ using the vector field $\rho_2
\p_y$ gives
\begin{align}
   \int_{\Omega_2} & ([-h^2 \Delta -1, \rho_2 \p_y ] \phi_h ) \phi_h
  dV \notag \\
  & = 
  -2 \int_{\Omega_2} \rho_{2,y} (h^2 \p_y^2 \phi_h ) \phi_h dV  + \O(1) \notag \\
  & = -2 \int_{x = 0}^r \int_{y =\alpha_2 }^{y = 0}
  \rho_{2,y} (h^2 \p_y^2 \phi_h ) \phi_h dy dx + \O(1)\notag \\
  & = 
  2 \int_{x = 0}^r \int_{y = \alpha_2}^{y = 0}
  \rho_{2,y} |h \p_y \phi_h |^2 dy dx  - 2 \int_{x = 0}^r h \rho_{1,y} (h \p_y \phi_h ) \phi_h
  |_{y = \alpha_2}^{y = 0} dx + \O(1) \notag \\
  & =  2 \int_{\Omega_2}
  \rho_{2,y} |h \p_y \phi_h |^2 dV  - h^\half \int_{x = 0}^r \tpsi(x/\epsilon) (h \p_y \phi_h)
  \phi_h (x,0) dx \notag \\
  & \quad + 2 \int_{x = 0}^r h \chi_x ( h \p_y \phi_h) \phi_h (x
  , \alpha_2(x)) dx + \O(1).
  \label{E:corner-bdy-y-2}
  \end{align}

Summing \eqref{E:corner-bdy-y-1} and \eqref{E:corner-bdy-y-2} and
making the obvious cancellations, we have
\begin{align}
  \int_{\Omega_1} & ([-h^2 \Delta -1, \rho_1 \p_y ] \phi_h) \phi_h dV
  + \int_{\Omega_2}  ([-h^2 \Delta -1, \rho_2 \p_y ] \phi_h) \phi_h
  dV \notag \\
  & = 2 \int_{\Omega_1}
  \rho_{1,y} |h \p_y \phi_h |^2 dV + 2 \int_{\Omega_2}
  \rho_{2,y} |h \p_y \phi_h |^2 dV \notag  \\
  & \quad  - 2 \int_{x = 0}^r (h \chi_x (h \p_y \phi_h ) \phi_h)(x,
  \alpha_1(x)) dx \notag \\
  & \quad + 2 \int_{x = 0}^r h \chi_x ( h \p_y \phi_h) \phi_h (x
  , \alpha_2(x)) dx + \O(1). \label{E:corner-y-both}
  \end{align}

We now use the Neumann boundary conditions on $\phi_h$ and sum
\eqref{E:corner-bdy-x-1} and \eqref{E:corner-y-both}.
On the top
segment where $y \geq 0$, we have \eqref{E:nuxy-1} and
\eqref{E:nuxy-2} so 
\[
0 = \p_\nu \phi_h = -\frac{\alpha_1'}{\kappa_1} \p_x \phi_h +
\frac{1}{\kappa_1} \p_y \phi_h.
\]
Then $\p_y \phi_h = \alpha_1' \p_x \phi_h$ on the upper section.
Similarly, on the bottom section we have \eqref{E:nuxy-3} and
\eqref{E:nuxy-4} so that $\p_y \phi_h =  \alpha_2' \p_x
\phi_h$.  Consequently, \eqref{E:corner-y-both} becomes
\begin{align}
 \int_{\Omega_1} & ([-h^2 \Delta -1, \rho_1 \p_y ] \phi_h) \phi_h dV
  + \int_{\Omega_2}  ([-h^2 \Delta -1, \rho_2 \p_y ] \phi_h) \phi_h
  dV \notag \\
  & = 2 \int_{\Omega_1}
  \rho_{1,y} |h \p_y \phi_h |^2 dV + 2 \int_{\Omega_2}
  \rho_{2,y} |h \p_y \phi_h |^2 dV \notag  \\
  & \quad  - 2 \int_{x = 0}^r (h \chi_x (\alpha_1' h \p_x \phi_h ) \phi_h)(x,
  \alpha_1(x)) dx \notag \\
  & \quad + 2 \int_{x = 0}^r h \chi_x (\alpha_2' h \p_x \phi_h) \phi_h (x
  , \alpha_2(x)) dx + \O(1). \label{E:corner-y-both-2}
  \end{align}
Now summing \eqref{E:corner-bdy-x-1} and \eqref{E:corner-y-both-2} and
making the obvious cancellations, we
have
\begin{align}
   \int_{\Omega} & ([-h^2 \Delta -1, \chi \p_x ] \phi_h ) \phi_h dV
   \notag \\
   &\quad + \int_{\Omega_1}  ([-h^2 \Delta -1, \rho_1 \p_y ] \phi_h) \phi_h dV
  + \int_{\Omega_2}  ([-h^2 \Delta -1, \rho_2 \p_y ] \phi_h) \phi_h
  dV \notag \\
  & = 2  \int_\Omega \chi_{x} (|h
  \p_x \phi_h |^2 dV + 
  2 \int_{\Omega_1}
  \rho_{1,y} |h \p_y \phi_h |^2 dV + 2 \int_{\Omega_2}
  \rho_{2,y} |h \p_y \phi_h |^2 dV  + \O(1).  \label{E:comm-master-1}
  \end{align}

It remains to compute the commutators  on the LHS of (\ref{E:comm-master-1}). By Green's formula,
\[
\int_\Omega ([-h^2 \Delta -1, \chi(x,y) \p_x ] \phi_h) \phi_h dV = -
\int_{\p \Omega} (h \p_\nu \chi h \p_x \phi_h ) \phi_h dS
\]
and for $j = 1,2$
\begin{align*}
\int_{\Omega_j} &([-h^2 \Delta -1, \rho_j(x,y) \p_y ] \phi_h) \phi_h
dV \\
& = -
\int_{\p \Omega_j} (h \p_\nu \rho_j h \p_y \phi_h ) \phi_h dS +
\int_{\p \Omega_j} (\rho_j h \p_y \phi_h )(h \p_\nu \phi_h) dS \\
& = -\int_{\p \Omega_j} (h \p_\nu \rho_j h \p_y \phi_h ) \phi_h dS.
\end{align*}

Here the second integral in the second line is zero since $\phi_h$ has
Neumann boundary conditions along the boundary $y = \alpha_1$, and
$\rho_j = 0$ along the line $y = 0$.

On the upper segment, we use that
$
\p_\nu = -\frac{\alpha_1'}{\kappa_1} \p_x + \frac{1}{\kappa_1} \p_y , $ and 
$
\p_x = \frac{1}{\kappa_1} \p_\tau - \frac{\alpha_1'}{\kappa_1} \p_\nu$ to get
\begin{align*}
  h \p_\nu \chi h \p_x \phi_h & =  \chi h \p_x h \p_\nu \phi_h + [h
    \p_\nu , \chi h \p_x ] \phi_h \\
  & = -\frac{\alpha_1'}{\kappa_1}  \chi h^2
\p_\nu^2 \phi_h -
   \frac{\alpha_1'}{\kappa_1^2} h \chi_x
 h \p_\tau \phi_h + \O(h) h \p_\tau \phi_h .
\end{align*}
  
Similarly, on the lower segment, $
\p_\nu = \frac{\alpha_2'}{\kappa_2} \p_x - \frac{1}{\kappa_2} \p_y $ and 
$
\p_x = \frac{1}{\kappa_2} \p_\tau + \frac{\alpha_2'}{\kappa_2} \p_\nu $ so
that 
\begin{align*}
  h \p_\nu \chi h \p_x \phi_h & =\frac{\alpha_2'}{\kappa_2}  \chi h^2
\p_\nu^2 \phi_h + \frac{\alpha_2'}{\kappa_2^2}h \chi_x h \p_\tau \phi_h +
\O(h) h \p_\tau
\phi_h.
\end{align*}

Plugging in, we have
\begin{align}
  \int_\Omega & ([-h^2 \Delta -1, \chi(x,y) \p_x ] \phi_h) \phi_h dV
 \notag  \\
  &  =-
  \int_{\p \Omega} (h \p_\nu \chi h \p_x \phi_h ) \phi_h dS \notag \\
  & = - \int_{\p \Omega \cap \{ y \geq 0 \} } (-\frac{\alpha_1'}{\kappa_1}  \chi h^2
\p_\nu^2 \phi_h -
   \frac{\alpha_1'}{\kappa_1^2} h \chi_x
   h \p_\tau \phi_h + \O(h) h \p_\tau \phi_h) \phi_h dS \notag \\
   & \quad - \int_{\p \Omega \cap \{ y \leq 0 \} } ( \frac{\alpha_2'}{\kappa_2}  \chi h^2
\p_\nu^2 \phi_h + \frac{\alpha_2'}{\kappa_2^2} h \chi_xh \p_\tau \phi_h + \O(h)
h \p_\tau \phi_h) \phi_h dS \notag \\
& = - \int_{\p \Omega \cap \{ y \geq 0 \} } (-\frac{\alpha_1'}{\kappa_1}  \chi h^2
\p_\nu^2 \phi_h -
   \frac{\alpha_1'}{\kappa_1^2} h \chi_x
   h \p_\tau \phi_h ) \phi_h dS \notag \\
   & \quad - \int_{\p \Omega \cap \{ y \leq 0 \} } ( \frac{\alpha_2'}{\kappa_2}  \chi h^2
\p_\nu^2 \phi_h + \frac{\alpha_2'}{\kappa_2^2} h \chi_x h \p_\tau \phi_h )
\phi_h dS + \O(1), \label{E:chi-IBP-1}
\end{align}
where we have again used integration by parts along the boundary and Sobolev embedding on the implicit $\O(h)$
boundary terms supported away from the corner, just as we did in \eqref{E:bdy-h-Sobolev-1}-\eqref{E:bdy-h-Sobolev-2}.

For the computations involving the vector fields $\rho_j \p_y$, we
have  by Green's formula 
\begin{align*}
  \int_{\Omega_j}&  ([-h^2 \Delta -1, \rho_j \p_y ] \phi_h ) \phi_h dV
  \\
  & = - \int_{\p \Omega_j} (h \p_\nu \rho_jh \p_y \phi_h) \phi_h dS
  \\
  & = - \int_{\{ y = \alpha_1(x)\}} (h \p_\nu \rho_jh \p_y \phi_h)
  \phi_h dS - \int_{\{ y = 0 \}} (h \p_\nu \rho_jh \p_y \phi_h)
  \phi_h dS,
\end{align*}
since $\tpsi(x /\epsilon)$ has compact support in $\{ x \leq 2
\epsilon \ll r \}$.
Using the same computations  which led to \eqref{E:chi-IBP-1}, on $\{
y = \alpha_1 \}$, we have
\[
h \p_\nu \rho_1 h \p_y \phi_h = \frac{1}{\kappa_1} \rho_1 h^2 \p_\nu^2
\phi_h + \frac{\alpha_1'}{\kappa_1^2} h\rho_{1,y} h \p_\tau \phi_h +
\O(h) h \p_\tau \phi_h.
\]
On $\{ y = 0 \}$, from
$\Omega_1$, we have $\p_\nu = - \p_y$, so that
\begin{align*}
  h \p_\nu \rho_1 h \p_y \phi_h & = -h \p_y \rho_1 h \p_y \phi_h \\
  & = -h \rho_{1,y} h \p_y \phi_h - \rho_1 h^2 \p_y^2 \phi_h \\
  & = -h \rho_{1,y} h \p_y \phi_h \\
  & = -(h^\half/2) \tpsi(x/\epsilon) h \p_y \phi_h,
\end{align*}
since $\rho_1(x,0) = 0$.  In the last line we have used \eqref{E:rho-y-0}.
Putting this together, we have
\begin{align}
  \int_{\Omega_1}&  ([-h^2 \Delta -1, \rho_1 \p_y ] \phi_h ) \phi_h dV
 \notag  \\ 
  & = - \int_{\p \Omega_1} (h \p_\nu \rho_1h \p_y \phi_h) \phi_h dS
  \notag \\
  & = -\int_{\{ y = \alpha_1 \}} (h \p_\nu \rho_1 h \p_y \phi_h)
  \phi_h dS  - \int_{\{y = 0 \}} (h \p_\nu \rho_1 h \p_y \phi_h)
  \phi_h dS \notag  \\
  & = -\int_{\{ y = \alpha_1\}} ( \frac{1}{\kappa_1} \rho_1 h^2 \p_\nu^2
\phi_h + \frac{\alpha_1'}{\kappa_1^2} h\rho_{1,y} h \p_\tau \phi_h +
\O(h) h \p_\tau \phi_h) \phi_h dS \notag \\
& \quad - \int_{\{ y = 0 \}} (-h \rho_{1,y} h \p_y \phi_h) \phi_h dS
\notag \\
& = -\int_{\{ y = \alpha_1\}} ( \frac{1}{\kappa_1} \rho_1 h^2 \p_\nu^2
\phi_h + \frac{\alpha_1'}{\kappa_1^2} h\rho_{1,y} h \p_\tau \phi_h)
\phi_h dS \notag \\
& \quad - \int_{\{ y = 0 \}} (-(h^\half/2) \tpsi(x/\epsilon) h \p_y \phi_h) \phi_h dS
+ \O(1).\label{E:comm-om-1}
\end{align}
Here we have once again used the Sobolev embedding on the implicit
$\O(h)$ boundary terms just as we did in \eqref{E:bdy-h-Sobolev-1}-\eqref{E:bdy-h-Sobolev-2}.

In a similar fashion, we compute for $\Omega_2$:
\begin{align}
  \int_{\Omega_2} & ([-h^2 \Delta -1, \rho_2 \p_y ] \phi_h ) \phi_h
  dV \notag 
  \\
  & = - \int_{\p \Omega_2} (h \p_\nu \rho_2 h \p_y \phi_h) \phi_h dS
  \notag 
  \\
  & =  -\int_{\{ y = \alpha_2\}} ( -\frac{1}{\kappa_2} \rho_2 h^2 \p_\nu^2
\phi_h - \frac{\alpha_2'}{\kappa_1^2} h\rho_{2,y} h \p_\tau \phi_h)
\phi_h dS \notag \\
& \quad - \int_{\{ y = 0 \}} ((h^\half/2) \tpsi(x/\epsilon) h \p_y \phi_h) \phi_h dS
+ \O(1). \label{E:comm-om-2}
\end{align}
Here in the last line we have used that, from $\Omega_2$, $\p_\nu =
\p_y$ along $\{ y = 0 \}$.

Summing \eqref{E:comm-om-1} and \eqref{E:comm-om-2}, we have
\begin{align}
   \int_{\Omega_1} & ([-h^2 \Delta -1, \rho_1 \p_y ] \phi_h ) \phi_h
   dV
   +  \int_{\Omega_2}  ([-h^2 \Delta -1, \rho_2 \p_y ] \phi_h ) \phi_h
   dV\notag  \\
   & = 
-\int_{\{ y = \alpha_1\}} ( \frac{1}{\kappa_1} \rho_1 h^2 \p_\nu^2
\phi_h + \frac{\alpha_1'}{\kappa_1^2}h \rho_{1,y} h \p_\tau \phi_h)
\phi_h dS \notag \\
& \quad + \int_{\{ y = 0 \}} ((h^\half/2) \tpsi(x/\epsilon) h \p_y
\phi_h) \phi_h dS \notag \\
& \quad
 +\int_{\{ y = \alpha_2\}} ( \frac{1}{\kappa_2} \rho_2 h^2 \p_\nu^2
\phi_h + \frac{\alpha_2'}{\kappa_2^2}h \rho_{2,y} h \p_\tau \phi_h)
\phi_h dS \notag \\
& \quad - \int_{\{ y = 0 \}} ((h^\half/2) \tpsi(x/\epsilon) h \p_y \phi_h) \phi_h dS
+ \O(1) \notag \\
& = -\int_{\{ y = \alpha_1\}} ( \frac{1}{\kappa_1} \rho_1 h^2 \p_\nu^2
\phi_h + \frac{\alpha_1'}{\kappa_1^2} h\rho_{1,y} h \p_\tau \phi_h)
\phi_h dS \notag \\
& \quad 
 +\int_{\{ y = \alpha_2\}} ( \frac{1}{\kappa_2} \rho_2 h^2 \p_\nu^2
\phi_h + \frac{\alpha_2'}{\kappa_2^2} h\rho_{2,y} h \p_\tau \phi_h)
\phi_h dS 
+ \O(1). \label{E:comm-om-both}
\end{align}

From 
\eqref{E:rho-y-j}, we know that $$\rho_j(x, \alpha_j(x)) =\alpha_j'
\chi(x, \alpha_j(x))$$ and $$\rho_{j,y} (x, \alpha_j(x))= \chi_x( x ,
\alpha_j(x)) + A_j + \O(1),$$ so that
\eqref{E:comm-om-both} becomes
\begin{align}
     \int_{\Omega_1} & ([-h^2 \Delta -1, \rho_1 \p_y ] \phi_h ) \phi_h
   dV
   +  \int_{\Omega_2}  ([-h^2 \Delta -1, \rho_2 \p_y ] \phi_h ) \phi_h
   dV\notag  \\
&
= -\int_{\{ y = \alpha_1\}} ( \frac{1}{\kappa_1} \alpha_1' \chi h^2 \p_\nu^2
\phi_h + \frac{\alpha_1'}{\kappa_1^2} h(\chi_x + A_1) h \p_\tau \phi_h)
\phi_h dS \notag \\
& \quad 
 +\int_{\{ y = \alpha_2\}} ( \frac{1}{\kappa_2} \alpha_2' \chi  h^2 \p_\nu^2
\phi_h + \frac{\alpha_2'}{\kappa_2^2} h(\chi_x + A_2) h \p_\tau \phi_h)
\phi_h dS 
+ \O(1) \\
& =  -\int_{\{ y = \alpha_1\}} ( \frac{1}{\kappa_1} \alpha_1' \chi h^2 \p_\nu^2
\phi_h + \frac{\alpha_1'}{\kappa_1^2} h\chi_x  h \p_\tau \phi_h)
\phi_h dS \notag \\
& \quad 
 +\int_{\{ y = \alpha_2\}} ( \frac{1}{\kappa_2} \alpha_2' \chi  h^2 \p_\nu^2
\phi_h + \frac{\alpha_2'}{\kappa_2^2} h\chi_x h \p_\tau \phi_h)
\phi_h dS 
+ \O(1) ,
\label{E:comm-om-both-2}
\end{align}
where we have used an argument similar to \eqref{E:Aj-control} to control the boundary terms with the $A_j$s.

Summing \eqref{E:chi-IBP-1} and \eqref{E:comm-om-both-2}, we have
\begin{align}
  \int_\Omega & ([-h^2 \Delta -1, \chi \p_x] \phi_h) \phi_h dV \notag
  \\
  & \quad + 
      \int_{\Omega_1}  ([-h^2 \Delta -1, \rho_1 \p_y ] \phi_h ) \phi_h
   dV
   +  \int_{\Omega_2}  ([-h^2 \Delta -1, \rho_2 \p_y ] \phi_h ) \phi_h
   dV\notag \\
   & = 
 - \int_{\p \Omega \cap \{ y \geq 0 \} } (-\frac{\alpha_1'}{\kappa_1}  \chi h^2
\p_\nu \phi_h -
   \frac{\alpha_1'}{\kappa_1^2} h \chi_x
   h \p_\tau \phi_h ) \phi_h dS \notag \\
   & \quad - \int_{\p \Omega \cap \{ y \leq 0 \} } ( \frac{\alpha_2'}{\kappa_2}  \chi h^2
\p_\nu \phi_h + \frac{\alpha_2'}{\kappa_2^2} h \chi_xh \p_\tau \phi_h )
\phi_h dS \notag \\
& \quad -\int_{\{ y = \alpha_1\}} ( \frac{1}{\kappa_1} \alpha_1' \chi h^2 \p_\nu^2
\phi_h + \frac{\alpha_1'}{\kappa_1^2} h\chi_x h \p_\tau \phi_h)
\phi_h dS \notag \\
& \quad 
 +\int_{\{ y = \alpha_2\}} ( \frac{1}{\kappa_2} \alpha_2' \chi  h^2 \p_\nu^2
\phi_h + \frac{\alpha_2'}{\kappa_2^2} h\chi_x h \p_\tau \phi_h)
\phi_h dS 
+ \O(1). \label{E:master-comm-2}
\end{align}
All of the displayed boundary terms in \eqref{E:master-comm-2} cancel,
so that
\begin{align}
 \int_\Omega & ([-h^2 \Delta -1, \chi \p_x] \phi_h) \phi_h dV \notag
  \\
  & \quad + 
      \int_{\Omega_1}  ([-h^2 \Delta -1, \rho_1 \p_y ] \phi_h ) \phi_h
   dV
   +  \int_{\Omega_2}  ([-h^2 \Delta -1, \rho_2 \p_y ] \phi_h ) \phi_h
   dV\notag \\
   & = \O(1).
 \label{E:master-comm-21}
\end{align}

Finally, equating 
\eqref{E:comm-master-1} and \eqref{E:master-comm-2}, we have shown
\[
2  \int_\Omega \chi_{x} (|h
  \p_x \phi_h |^2 dV + 
  2 \int_{\Omega_1}
  \rho_{1,y} |h \p_y \phi_h |^2 dV + 2 \int_{\Omega_2}
  \rho_{2,y} |h \p_y \phi_h |^2 dV
= 
\O(1).
\]

Using the same estimates as in \eqref{E:chixsupp}, we have that
\[
\chi_x  \geq  h^{-1/2} \gamma(x/h^{1/2})\gamma(y/h^{1/2})
\gamma(y/h^{1/2}) - \O(1) \]
and on each of $\Omega_j$,
\[
\rho_{j,y} \geq  c_0 h^{-1/2} \gamma(x/h^{1/2})\gamma(y/h^{1/2}),
\]
for some $c_0>0$ independent of $h$, 
so arguing as in \eqref{E:comm-sum-1}, we finally get
\[
\int_{B((0,0), h^\half)} h^{-\half} | \phi_h |^2 dV = \O(1).
\]

{\bf Step 2: $\delta = 2/3$.}

We are now ready to bootstrap the estimate for $\delta = 2/3$.  The
argument proceeds exactly as in the $\delta = 1/2$ case, but now some
of the error terms are no longer so easy to absorb.  The bootstrap from $\delta = 1/2$ to $\delta = 2/3$ forms the basis for our induction argument.  The idea is to use a Sobolev-type estimate and the estimate in a ball of radius $\sim h^{1/2}$ to control one of the largest boundary terms which show up when our cutoff function is on scale $h^{2/3}$.  The other largest boundary terms will cancel similar to the $h^{1/2}$ case.

 We begin with the case where $p_0$ is not a corner, starting with
 defining the cutoff $\chi$ as in 
 \eqref{E:chidef}.  For $\epsilon>0$ small but independent of $h$, let

 \begin{equation}
  \chi(x,y) = \tchi ( x/h^{2/3}) \tpsi (x/\epsilon) \tpsi(
  y/\epsilon)\label{E:chidef-101}.
 \end{equation}\
 We similarly define
 \[
 \rho(x,y) := \alpha'(x) \tchi ( \beta(y)/ h^{2/3}) \tpsi (x/\epsilon) \tpsi(
y/\epsilon). 
\]
 
Observe the only difference  in (\ref{E:chidef-101}) versus the cutoff in (\ref{E:chidef})  is the $h^{-2/3}$ appearing instead of $h^{-1/2}$.
This is good, since we will once again need some boundary terms to
cancel.  The argument is identical to the argument in the $\delta =
1/2$ case except for one piece: $\chi_{xx}$ is no longer $\O(h^{-1})$
but instead is $\O(h^{-4/3})$.  We will have to work harder to control
this.

Beginning with the commutator, 
since $\chi_y$ and $\chi_{yy}$ are both 
bounded, we have
\begin{align*}
  \int_{\Omega} & ([-h^2 \Delta -1, \chi \p_x ] \phi_h ) \phi_h dV \\
  & = \int_{\Omega} ((-2 \chi_x h^2 \p_x^2 - h \chi_{xx} h \p_x
  - 2 \chi_y h \p_x h \p_y - h\chi_{yy} h \p_x) \phi_h ) \phi_h dV \\
  & = -2 \int_\Omega (\chi_x h^2 \p_x^2 \phi_h ) \phi_h dV
-\int_{\Omega}  h \chi_{xx}( h \p_x \phi_h )
 \phi_h dV
  + \O(1).
  \end{align*}

Let
\[
I = \int_\Omega  h \chi_{xx}( h \p_x \phi_h )
\phi_h dV.
\]
Even though $\chi_{xx} = \O(h^{-4/3})$, we will nevertheless show $I$
is bounded.  Write
\[
I = \int_{-r}^r \int_{\beta(y)}^r h \chi_{xx} (h \p_x
\phi_h) \phi_h dx dy
\]
and integrate by parts:
\begin{align}
  I & = - \int_{-r}^r \int_{\beta(y)}^r 
  (\phi_h) h \p_x (h \chi_{xx} \phi_h) dxdy + \int_{-r}^r h^2
  \chi_{xx} | \phi_h|^2 |_{\beta(y)} ^r dy\notag  \\
  & = - {I} - h^2 \int_{-r}^r \int_{\beta(y)}^r 
  \chi_{xxx} |\phi_h|^2  dxdy  + \int_{-r}^r h^2
  \chi_{xx} | \phi_h|^2 |_{\beta(y)}^r dy. \label{E:I-101}
\end{align}
Let
\[
I_1 = h^2 \int_{-r}^r \int_{\beta(y)}^r 
\chi_{xxx} |\phi_h|^2  dxdy.
\]
We have $\chi_{xxx} = h^{-2}$, so $I_1 = \O(1)$.  We pause briefly
here to observe that the function $\chi_{xxx}$ still has large support
in the $y$ direction, so we cannot use the $\delta = 1/2$
non-concentration estimate here.  We use that for the next term: let
\[
I_2 = \int_{-r}^r h^2
\chi_{xx} | \phi_h|^2 |_{\beta(y)} ^r dy.
\]
As before, the support properties of $\chi$ and its derivatives tells
us
\[
I_2 = - \int_{-r}^r h^2
\chi_{xx} | \phi_h|^2 (\beta(y), y)  dy.
\]


 \begin{remark}  Away from corners, the bound $I_2 = O(1)$ follows from the universal eigenfunction boundary restriction upper bound in \cite{Ta}. Indeed, since $h^2 \chi_{xx} = O(h^{2/3}) \tilde{\chi}_{xx}$,  
$$ I_{2} = O(h^{2/3}) \int_{\partial \Omega} \tilde{\chi}_{xx}| \phi_h |^2  dS = O(1)$$
where the last estimate follows from the Tataru bound $ \int_{\partial \Omega} \tilde{\chi}_{xx} | \phi_h |^2 dV = O(h^{-2/3})$ since $\tilde{\chi}_{xx}$ is supported away from corners. However, since we will need our estimates to hold near corners as well, we give a more direct argument here to bound $I_2.$ \end{remark}

Note that $I_2$ \,  is a boundary integral with support in three different regions in
the $x$ direction.  We have $\chi_{xx} = \O(h^{-4/3})$ for $-3 h^{2/3}
\leq x \leq 3 h^{2/3}$, and $\chi_{xx} = \O(1)$ for $| x | \geq 3
h^{2/3}$.  In the latter region, the boundary integral then has $h^2$,
so Sobolev embedding gives $\O(h)$.  It is on the region $-3 h^{2/3}
\leq x \leq 3h^{2/3}$ where we may encounter a problem. 
Let $[a(h), b(h)]$ be the image in $y$ of $[-3 h^{2/3} , 3 h^{2/3}]$.
Using Lemma \ref{L:Sobolev} with $\eta \sim h^{1/2}$, we have 
\begin{align}
I_2 & = h^{2/3}\int_{[a(h), b(h)]} (h^{4/3} \chi_{xx}) | \phi_h |^2 dS  + \O(1) \notag \\
& = \O(h^{-1/3}) \int_{B(p_0, M h^{2/3})} | \phi_h  |^2 dV + \O(1) \notag \\
& = \O(h^{-1/3}) \int_{B(p_0, h^\half)} | \phi_h|2 dV + \O(1) \notag \\
& = \O(h^{1/6}) + \O(1).\label{sobolev}
\end{align}
  Here $M>0$ is a constant large enough so that
  \[
  \{ (\beta(y), y) : a(h) \leq y \leq b(h) \} \subset B(p_0, M
  h^{2/3}).
  \]
%
  Combining this with the estimate on $I_1$ and plugging into \eqref{E:I-101}, we have
  \[
 2  I = \O(1).
 \]

 Now the computations \eqref{E:boundaryI1}-\eqref{E:comm-sum-1} are identical, including the
 boundary cancellations, leading to
 \begin{align}
    \int_\Omega & ([-h^2 \Delta -1, \chi \p_x ] \phi_h) \phi_h dV 
  \notag \\
  & \quad + \int_\Omega  ([-h^2 \Delta -1, \rho \p_y ] \phi_h) \phi_h
  dV \notag  \\
  & \geq c_0
   \int_\Omega  h^{-2/3} \gamma(x/h^{2/3})
  \gamma(y/h^{2/3}) | \phi_h|^2 
  dV - \O(1) \label{E:comm-sum-101}
  \end{align}
for some $c_0>0$ independent of $h$.

On the other hand,  expanding the commutator, using the Neumann
boundary conditions, and applying Sobolev embedding as in \eqref{E:boundary-102} yields the exact same identity:
  \begin{align*}
       \int_\Omega  & ([-h^2 \Delta -1, \chi \p_x ] \phi_h) \phi_h dV
       \\
       & = \int_{\p \Omega} \left(\chi \frac{
         \alpha'}{\kappa} h^2 \p_\nu^2 \phi_h \right) \phi_h dS \\
       & \quad + \int_{\p \Omega} \left( \frac{ \alpha'}{\kappa^2} h
       \chi_xh \p_\tau \phi_h \right) \phi_h dS + \O(1).
  \end{align*}
  And again, similar computations give
   \begin{align*}
    \int_\Omega & ([-h^2 \Delta -1, \rho \p_y ] \phi_h ) \phi_h dV \\
    & = - \int_{\p \Omega} \left( \rho \frac{1}{\kappa} h^2 \p_\nu^2
    \phi_h \right) \phi_h dS \\
    & \quad - \int_{\p \Omega} \left( \frac{\alpha'}{\kappa^2} h\rho_y h
    \p_\tau \phi_h \right)\phi_h dS  + \O(1).
   \end{align*}
   Again using the same cancellation on the boundary terms,
   we finally arrive at
   \[
   \int_\Omega   ([-h^2 \Delta -1, \chi \p_x ] \phi_h) \phi_h dV
   + 
 \int_\Omega  ([-h^2 \Delta -1, \rho \p_y ] \phi_h ) \phi_h dV =
 \O(1).
 \]
 Comparing to \eqref{E:comm-sum-101}, we have
 \[
   \int_\Omega  h^{-2/3} \gamma(x/h^{2/3})
  \gamma(y/h^{2/3}) | \phi_h|^2 
  dV = \O(1),
  \]
  which completes the proof in the case $p_0$ is not a corner.

To prove the result for $\delta = 2/3$ in the corner case, we again have to be careful with the additional $A_j$s that show up.  Copying the computations in the $\delta = 1/2$ case, we are led to consider the integrals similar to \eqref{E:corner-bdy-y-1001}, but on scale $h^{2/3}$:
\begin{align}
  \int_{\Omega_1} & ([-h^2 \Delta -1, \rho_1 \p_y ] \phi_h ) \phi_h
  dV \notag \\
  & = 2 \int_{\Omega_1}
  \rho_{1,y} |h \p_y \phi_h |^2 dy dx \notag \\
  & \quad - 2 \int_{x = 0}^r (h (\chi_x+A_1) (h \p_y \phi_h ) \phi_h)(x,
  \alpha_1(x)) dx \notag 
  \\
  & \quad + 2 \int_{x = 0}^r(h^{-2/3}/2) \tpsi(x/\epsilon) (h  (h \p_y \phi_h ) \phi_h)(x,
  0) dx + \O(1). \label{E:corner-bdy-y-1002} 
  \end{align}
  
  In order to estimate the term with $A_1$ in \eqref{E:corner-bdy-y-1002} we switch to tangent coordinates $\tau$ just like for \eqref{E:corner-bdy-y-1001}.  But now we have 
  \begin{align*}
  \p_\tau \tA_1 & =  (\p_\tau \tchi( \beta_1(y) /h^{2/3}) )
  (x/\alpha_1(x)) \alpha_1''(xy/\alpha_j(x))\tpsi
(x/\epsilon) \tpsi (y/ \epsilon)
\\
& \quad +  \tchi( \beta_1(y) /h^{2/3}) 
 \p_\tau ( (x/\alpha_1(x)) \alpha_1''(xy/\alpha_1(x))\tpsi
(x/\epsilon) \tpsi (y/ \epsilon)) \\
& = : \tA_{11} + \tA_{12}.
\end{align*}
This means that   
  $\tA_{11} = \O(h^{-2/3})$ with support on $|\tau| \lesssim h^{2/3}$ and $\tA_{12} = \O(1)$.  
  
  Replacing $\tA_1$ with $(\kappa_1/\alpha_1')\tA_1 $ to account for the coordinate change $y \mapsto \tau$ and continuing in tangent coordinates:
  \begin{align*}
  \int_{x = 0}^r (h A_1(h \p_y \phi_h ) \phi_h)(x,
  \alpha_1(x)) dx  & = \int_{\tau = 0}^r h \tA_1(\tau) (h \p_\tau \phi_h) {\phi}_h d \tau \\
  & = \frac{h^2}{2} \int_{\tau = 0}^r \tA_1 \p_\tau | \phi_h|^2 d \tau \\
  & = - \frac{h^2}{2} \int_{\tau = 0}^r (\p_\tau \tA_1) | \phi_h|^2 d \tau \\
  & = - \frac{h^2}{2} \int_{\tau = 0}^r (\tA_{11} + \tA_{12}) | \phi_h|^2 d \tau \\
  & = - \frac{h^2}{2} \int_{\tau = 0}^r \tA_{11} | \phi_h|^2 d \tau + \O(1).
 \end{align*}
Now an application of Lemma \ref{L:Sobolev} with $\eta \lesssim h^{2/3}$ yields
\begin{align*}
\frac{h^2}{2} & \int_{\tau = 0}^r A_{11} | \phi_h|^2 d \tau \\
& = \O(h^{4/3}) \int_{\tau = 0}^r (h^{2/3} A_{11}) | \phi_h|^2 d \tau \\
& = \O(h^{1/3}) \int_{B(p_0, M h^{2/3})} | \phi_h|^2 d V \\
& \leq C h^{1/3} \int_{B(p_0, h^{1/2})} | \phi_h|^2 dV \\
& = \O(h^{5/6}).
\end{align*}

The rest of the proof of the $\delta = 2/3$ case for corners is exactly the same as for $\delta = 1/2$.

%
%
%
%
%
%

  {\bf Step 3 (induction):} $2/3 < \delta <1.$

Our goal now is to prove that for any integer $k >0$, the theorem is
true for $\delta = 1-1/3k$.  The case $k = 1$ has already been shown,
so we are ready for the induction step.

We will need better control over some of the boundary terms than we
have had previously.  We will employ more or less the same cutoffs, so
the same important cancellation will occur, but it is the ``lower
order'' terms we need to estimate.  The issue is that lower order for
the induction means we use the estimates for $\delta = 1-1/3k$ to prove the
estimates for $\delta = 1-1/3(k+1)$.  Since in these cases $\delta
>2/3$, this is more complicated.

In order to fix the ideas and notations, let $\tchi$ and $\tpsi$ be as
in Subsection \ref{SS:preliminaries}.  We work initially away from a corner, but
the proof in the corner case follows line by line as the proof in the
$\delta = 1/2$ case, with one notable exception which we shall point
out as we proceed.

Fix $p_0 \in \p \Omega$ away from a corner and rotate and translate as
above so that $p_0 = (0,0)$, and locally $\p \Omega$ is a graph $y =
\alpha(x)$, $\alpha'(0) \neq 0$.  We also write $\beta = \alpha^{-1}$
so that the boundary can also be written $x = \beta(y)$.  
Let $r>0$ be as in the beginning of the proof, a number
independent of $h$  such that $B(p_0, r)$ does not meet any
corners.  Again, this is just to avoid messy numerology when writing
down our integral formulae.

Fix an integer $k>0$ and let
\[
\eta_k = 1-\frac{1}{3k}
\]
be the corresponding index.  Let 
\begin{equation}
  \label{E:chi-def-k}
  \chi = \tchi(x/h^{\eta_{k+1}})\tpsi^2(x/h^{\eta_{k}})
  \tpsi^2(y/h^{\eta_k}).
\end{equation}
We observe that this cutoff has derivative
$\sim h^{-\eta_{k+1}}$ for $x$ in an $h^{\eta_{k+1}}$ neighbourhood,
but is supported in a neighbourhood of size $h^{\eta_k}$.  In
particular, we record the following facts:
\begin{itemize}
  \item $\chi(x,y) = x/2h^{\eta_{k+1}}$ for $-h^{\eta_{k+1}} \leq x
    \leq h^{\eta_{k+1}}$ and $-h^{\eta_k} \leq y \leq h^{\eta_k}$.

  \item
    $\chi$ is supported in $[-2h^{\eta_k} , 2 h^{\eta_k}]^2$.

  \item
    The support of $\chi_x$ has three connected components in $x$:
    \[
    \chi_x = 1/2h^{\eta_{k+1}}, \,\, | x | \leq h^{\eta_{k+1}},
    \]
    and
    \[
    \chi_x = \O(h^{-\eta_{k+1}}), \,\, | x | \leq 3 h^{\eta_{k+1}};
    \]
    \[
    \chi_x = 0, \,\, 3 h^{\eta_{k+1}} \leq | x | \leq h^{\eta_k};
    \]
    and
    \[
    \chi_x = \O(h^{-\eta_k}), \,\, h^{\eta_k} \leq | x | \leq 2 h^{\eta_k}.
    \]
\end{itemize}
The purpose for replacing $\tpsi$ with $\tpsi^2$ will become apparent
shortly.

{\bf Claim:}  For $h>0$ sufficiently small, we have the estimate
\begin{equation}
  \label{E:chi-k-deriv}
\int_\Omega \chi( | h \p_x \phi_h|^2 + | h \p_y\phi_h|^2)dV = \O(h^{\eta_k}).
\end{equation}
To prove the claim, we will integrate by parts.  We first get rid of
the $\tchi$ part:
\[
| \chi | \leq \tpsi^2(x/h^{\eta_k}) \tpsi^2(y/h^{\eta_k}).
\]
In order to ease notation, let $\psi_k(x) = \tpsi(x/h^{\eta_k})$ and
similarly for $\psi_k(y)$.  
Then we integrate by parts.  Letting $I$ denote the integral (after
removing the $\tchi$):
\begin{align*}
  I & = \int_\Omega  \psi_k^2(x) \psi_k^2(y)( | h \p_x \phi_h|^2 + | h
  \p_y \phi_h|^2)dV \\
  & = \int_\Omega \psi_k^2(x) \psi_k^2(y) (-h^2 \Delta \phi_h) \phi_h dV \\
  & \quad - \int_{\Omega} 2
  h^{1-\eta_k} \tpsi'(x/h^{\eta_k})
  \psi_k(x)\psi_k^2(y) (h \p_x \phi_h) \phi_h dV \\
  & \quad -  \int_{\Omega} 2
  h^{1-\eta_k} \psi_k^2(x/h)
  \tpsi'(y/h^{\eta_k})\psi_k(y) (h \p_y \phi_h) \phi_h dV \\
  & \quad + \int_{\p \Omega} h \psi_k^2(x)
  \psi_k^2(y) ( h \p_\nu \phi_h) \phi_h dS.
\end{align*}
The last term is zero due to the Neumann boundary conditions.  For the
remaining terms, observe that $1 - \eta_k >0$ so we can estimate the
second and third terms using Cauchy's inequality:
\begin{align*}
\Bigg| \int_{\Omega} & 2
  h^{1-\eta_k} \tpsi'(x/h^{\eta_k})
  \psi_k(x)\psi_k^2(y) (h \p_x \phi_h) \phi_h dV \\
  & \quad +  \int_{\Omega} 2
  h^{1-\eta_k} \psi_k^2(x)
  \tpsi'(y/h^{\eta_k})\psi_k(y) (h \p_y \phi_h) \phi_h dV \Bigg|
  \\
  & \leq C h^{1 - \eta_k} \int_{[-2 h^{\eta_k}, 2 h^{\eta_k}]^2}
  \psi_k^2(y)  (\psi_k^2(x)| h \p_x \phi_h|^2 +
(\psi_k'(x))^2  |\phi_h|^2) dV \\
  & \quad + C h^{1 - \eta_k} \int_{[-2 h^{\eta_k}, 2 h^{\eta_k}]^2}
  \psi_k^2(x)(\psi_k^2(y)| h \p_y \phi_h|^2 +
(\psi_k'(y))^2  |\phi_h|^2) dV.
\end{align*}
Recall we are assuming the theorem is true for $k$, so we have
\[
\int_{[-2 h^{\eta_k}, 2 h^{\eta_k}]^2}| \phi_h|^2 dV = \O(h^{\eta_k}).
\]
Collecting terms, we have 
\[
I \leq C h^{1 - \eta_k} I + \O(h^{\eta_k}).
\]
Rearranging proves the claim since $\eta_k <1$.

We now use this Claim together with Lemma \ref{L:Sobolev} to control boundary terms.  
We  follow the proof in the $\delta = 2/3$ case.  We compute the
commutator, being very careful for ``lower order terms''.  Recalling
the definition \eqref{E:chi-def-k} of $\chi$:
\begin{align}
  \int_\Omega & ([-h^2 \Delta -1, \chi \p_x ] \phi_h) \phi_h dV \notag
  \\
  & = \int_\Omega ( (-2 \chi_x h^2 \p_x^2 - h \chi_{xx} h \p_x - 2
  \chi_{y} h \p_y h \p_x - h \chi_{yy} h \p_x ) \phi_h ) \phi_h dV.
  \label{E:comm-1001}
\end{align}
Let us examine each term separately.  We have
\begin{align}
  \int_\Omega & (-2 \chi_x h^2 \p_x^2\phi_h ) \phi_h dV \notag \\
  & = \int_{-r}^r \int_{\beta(y)}^r (-2 \chi_x h^2
  \p_x^2\phi_h ) \phi_h dx dy \notag \\
  & = 
\int_{-r}^r \int_{\beta(y)}^r (2 \chi_x |h
\p_x\phi_h|^2  dx dy \notag \\
& \quad + 
\int_{-r}^r \int_{\beta(y)}^r (2 h\chi_{xx} h
  \p_x \phi_h ) \phi_h dx dy
 \notag  \\
  & \quad - 
\int_{-r}^r  2 h\chi_x( h
\p_x\phi_h ) \phi_h |_{\beta(y)}^r  dy.  \label{E:IBP-1001}
\end{align}
The term in \eqref{E:IBP-1001} with $\chi_{xx}$ also shows up in
\eqref{E:comm-1001}.  We know that $\chi_{xx} = \O(h^{-2 \eta_{k+1}})$
and is supported on a set of radius $\sim h^{\eta_k}$, so our 
Claim \eqref{E:chi-k-deriv} gives
\begin{align*}
  \int_\Omega h \chi_{xx} (h \p_x \phi_h ) \phi_h dV  & = \O( h h^{-2
    \eta_{k+1}} h^{\eta_k})\\
  & = \O(1),
\end{align*}
since
\[
1 - 2 \eta_{k+1} + \eta_k = 1 - 2\left(1- \frac{1}{3(k+1)}\right) + 1
- \frac{1}{3k} = \frac{k-1}{3k(k+1)} \geq 0.
\]

For the two remaining terms in \eqref{E:IBP-1001}, we need to use the
support properties of $\chi_x$.  We have
\begin{align*}
  \chi_x & = h^{-\eta_{k+1}} \tchi'(x/h^{\eta_{k+1}})
  \tpsi^2(x/h^{\eta_k}) \tpsi^2 (y/h^{\eta_k}) \\
  & \quad + 2h^{-\eta_k} \tchi(x/h^{\eta_{k+1}})
  \tpsi'(x/h^{\eta_k})\tpsi(x/h^{\eta_k}) \tpsi^2 (y/h^{\eta_k}) .
\end{align*}
Recalling our function $\gamma(s) = \tchi'(s)$, we have
\[
\chi_x \geq h^{-\eta_{k+1}} \gamma(x/h^{\eta_{k+1}}) -
\O(h^{-\eta_k}),
\]
and let us stress again that the $\O(h^{-\eta_k})$ error term is
supported on scale $h^{\eta_k}$, so our Claim \eqref{E:chi-k-deriv} applies.  Hence we have
\[
\int_\Omega 2 \chi_x |h
\p_x\phi_h|^2  dV \geq h^{-\eta_{k+1}} \int_\Omega
\gamma(x/h^{\eta_{k+1}}) \gamma(y/h^{\eta_{k+1}}) | h \p_x \phi_h |^2
dV - \O(1).
\]

We now examine the boundary term in \eqref{E:IBP-1001}.  This is again
where we must be mindful of any differences between the case with or
without corners.  As in the previous steps in the proof, we will also be
using a commutant with the vector field $\rho \p_y$, where
\begin{equation}
\rho = \alpha'(x)\tchi(\beta(y)/h^{\eta_{k+1}}) \tpsi^2(x/h^{\eta_k})
\tpsi^2(y/h^{\eta_k}).
\label{E:rho-def-1001}
\end{equation}
The same cancellations of boundary terms will happen on the set where
$\rho_y = \chi_x$, which is for $-3h^{\eta_{k+1}} \leq x \leq 3
h^{\eta_{k+1}}$.  For $|x| \geq 3 h^{\eta_{k+1}}$, these functions do
not necessarily agree, but in this region both $\chi_x$ and $\rho_y$
are $\O(h^{-\eta_k})$ rather than $\O(h^{-\eta_{k+1}})$.  Further,
they are supported away from $x = 0$ so that we may further integrate
by parts on the boundary.  That is,
\begin{align*}
 & \int_{-r}^r   (2 h\chi_x h
\p_x\phi_h ) \phi_h |_{\beta(y)}^r  dy
\\
& = -\int_{-r}^r  2 h h^{-\eta_{k+1}}
\tchi'(x/h^{\eta_{k+1}}) \tpsi^2(\beta(y)/h^{\eta_k}) \tpsi^2 (y/h^{\eta_k}) h
\p_x\phi_h ) \phi_h (\beta(y),y) dy \\
&
\quad -\int_{-r}^r  4 h h^{-\eta_{k}}
\tchi(x/h^{\eta_{k+1}}) \tpsi'(\beta(y)/h^{\eta_k}) \tpsi(\beta(y)/h^{\eta_k})\tpsi^2 (y/h^{\eta_k}) h
\p_x\phi_h ) \phi_h (\beta(y),y)  dy.
\end{align*}
The cutoffs in the second term are supported away from $x = 0$, where
$\tchi = \pm 1$.  Let $\tau$ denote the tangent variable so that, as
above,
\[
\p_y \phi_h|_{\p \Omega} = \frac{\alpha'}{\kappa} \p_\tau \phi_h|_{\p
  \Omega}.
\]
Let
\[
\tzeta(y) = \tchi(\beta(y)/h^{\eta_{k+1}})  \tpsi'(\beta(y)/h^{\eta_k}) \tpsi^2
(y/h^{\eta_k}),
\]
and let $\zeta(\tau)$ denote $\tzeta$ in tangent coordinates,
so that $\p_\tau^m \zeta = \O(h^{-m\eta_k})$.  
Then
\begin{align*}
  \int_{-r}^r & 2 h h^{-\eta_{k}}
\tchi(x/h^{\eta_{k+1}}) \tpsi'(\beta(y)/h^{\eta_k})  \tpsi(\beta(y)/h^{\eta_k})\tpsi^2 (y/h^{\eta_k}) h
\p_x\phi_h ) \phi_h (\beta(y),y)  dy \\
& = \int_{\p \Omega}  h^{2 - \eta_k} \zeta(\tau)
\frac{\alpha'}{\kappa} \p_\tau ( | \phi_h|^2) d \tau \\
& = - \int_{\p \Omega}  h^{2 - \eta_k} \p_\tau (\zeta(\tau)
\frac{\alpha'}{\kappa})  | \phi_h|^2 d \tau \\
& = \O(h^{2 - 2 \eta_k} h^{\eta_k-1})
\\
& = \O(1),
\end{align*}
where we have used our Claim \eqref{E:chi-k-deriv} together with Lemma \ref{L:Sobolev} and that $\eta_k < 1$ for every
$k$.  Collecting terms, we have
\begin{align*}
  & \int_{-r}^r   (2 h\chi_x h
\p_x\phi_h ) \phi_h |_{\beta(y)}^r  dy
\\
& = -\int_{-r}^r  2 h h^{-\eta_{k+1}}
\tchi'(\beta(y)/h^{\eta_{k+1}}) \tpsi^2(\beta(y)/h^{\eta_k}) \tpsi^2 (y/h^{\eta_k}) h
\p_x\phi_h ) \phi_h (\beta(y),y) dy + \O(1).
\end{align*}

%

We continue with the other two terms in \eqref{E:comm-1001}.  We have
$\chi_y = \O(h^{-\eta_k})$ and $h \chi_{yy} = \O(h^{1-2 \eta_k}) =
\O(h^{-\eta_k})$, and we are integrating over a region of radius
$\sim h^{\eta_k}$, so using our Claim \eqref{E:chi-k-deriv} together with Lemma \ref{L:Sobolev} yet again,
\[
\int_\Omega ( (- 2
  \chi_{y} h \p_y h \p_x - h \chi_{yy} h \p_x ) \phi_h ) \phi_h dV = \O(1).
  \]

  We now use the vector field $\rho \p_y$ as in
  \eqref{E:rho-def-1001}.  All of the computations are similar, once
  again singling out the boundary terms which are supported near $x =
  0$ but where $\chi_x = \rho_y$ and summing as in the $\delta = 2/3$
  case, we get
  \begin{align*}
    \int_\Omega & ([-h^2 \Delta -1, \chi \p_x] \phi_h ) \phi_h dV +
    \int_\Omega ([-h^2 \Delta -1, \rho \p_y] \phi_h ) \phi_h dV \\
    & = 2 \int_\Omega \chi_x | h \p_x \phi_h |^2 dV + 2 \int_\Omega
    \rho_y | h \p_y \phi_h |^2 dV \\
    & \quad 
-\int_{-r}^r  2 h h^{-\eta_{k+1}}
\tchi'(\beta(y)/h^{\eta_{k+1}}) \tpsi(\beta(y)/h^{\eta_k}) \tpsi (y/h^{\eta_k}) h
\p_x\phi_h ) \phi_h (\beta(y),y) dy \\
& \quad   
+\int_{-r}^r  2 h h^{-\eta_{k+1}}
\tchi'(x/h^{\eta_{k+1}}) \tpsi(x/h^{\eta_k}) \tpsi (\alpha(x)/h^{\eta_k}) h
\p_y\phi_h ) \phi_h (x , \alpha(x)) dx + \O(1) \\
& \geq c_0 h^{-\eta_{k+1}} \int_\Omega \gamma(x/h^{\eta_{k+1}})
\gamma(y/h^{\eta_{k+1}}) | \phi_h |^2 dV - \O(1) \\
& \geq \frac{c_0}{4} h^{-\eta_{k+1}} \int_{\Omega\cap B(p_0, h^{\eta_{k+1}})}  | \phi_h |^2 dV - \O(1)
\end{align*}
for $c_0>0$ independet of $h$.

  Finally, we unpack the commutator as in the $\delta = 1/2$ case and
  use the claims and observations above to conclude that
  \[
  \int_\Omega ([-h^2 \Delta -1, \chi \p_x] \phi_h ) \phi_h dV +
    \int_\Omega ([-h^2 \Delta -1, \rho \p_y] \phi_h ) \phi_h dV =
    \O(1).
    \]
    This completes the proof in the case $p_0$ is not a corner.

     In
    the case $p_0$ is a corner, 
    we use the functions
    \[
    \rho_j = \alpha_j'(xy/\alpha_j(x)) \tchi(\beta_j(y)/h^{\eta_{k+1}}) \tpsi^2(x/h^{\eta_k})
\tpsi^2(y/h^{\eta_k}).
\]
We recall the following facts about the $\rho_j$s:  First, along $y =
\alpha_j$, $j = 1,2$, we have
\begin{equation}
  \label{E:rho-y-j-101}
\rho_j ( x , \alpha_j(x)) = \alpha_j'(x) \tchi(x/h^{\eta_{k+1}})\tpsi^2(x/h^{\eta_k})
\tpsi^2(\alpha_j(x)/h^{\eta_k})
 = \alpha_j' \chi(x,
\alpha_j(x)).
\end{equation}
Along $y = 0$, $\rho_j = 0$, since $\tchi$ is an odd function and
$\beta_j(0) = 0$ for $j = 1,2$.  Along
$y = \alpha_j$,
\begin{align*}
\p_y \rho_j(x, \alpha_j(x)) & = h^{-{\eta_{k+1}}} \alpha_j' (x)
\beta_j'(\alpha_j(x)) \tchi'(x/h^{\eta_{k+1}})
\tpsi^2(x/h^{\eta_k})\tpsi^2(\alpha_j(x)/h^{\eta_k}) + A_j + \O(h^{-\eta_k}) \\
& =  h^{-{\eta_{k+1}}} \tchi'(x/h^{\eta_{k+1}})
\tpsi^2(x/h^{\eta_k})\tpsi^2(\alpha_j(x)/h^{\eta_k}) + A_j + \O(h^{-\eta_k}) \\
& = \p_x \chi(x, \alpha_j(x)) + A_j + \O(h^{-\eta_k}).
\end{align*}
Here $A_j$ is the term we get when the derivative lands on the $\alpha_j'(xy/\alpha_j(x))$:
\[
A_j 
= (x/\alpha_j(x)) \alpha_j''(xy/\alpha_j(x)) \tchi( \beta_j(y) /h^{\eta_{k+1}}) \tpsi^2
(x/h^{\eta_k}) \tpsi^2 (y/ h^{\eta_k}).
\]

\begin{remark}
\label{R:corner-vanish-2}
We point out again that the implicit $\O(h^{-\eta_k})$ error term is due to differentiating the $\tpsi^2(y/h^{\eta_k})$ which is then supported away from $(0,0)$ on scale $\sim h^{\eta_k}$, hence does not see $\tchi(\beta_j(y)/h^{\eta_{k+1}})$.  
We single out the behaviour of $A_j$ because it is still singular due to the $\tchi(\beta_j/h^{\eta_{k+1}})$.
Indeed, we have
\[
\p_y^p A_j = \O(h^{-p \eta_{k+1}}),
\]
and for $k \geq 1$, $\p_y^k A_j$ is supported where $|y| \lesssim h^{\eta_{k+1}}$.
 We know  $\tchi (\beta_j(y)/h^{\eta_{k+1}})$ vanishes at $x = y = 0$ so that $A_j$ does as well.
\end{remark}

Finally, along $y =
0$, we have $\tchi'(0) = 1/2$ and $\tpsi'(0) = 0$, so that
\begin{align}
  \p_y \rho_j(x,0) &= h^{-\eta_{k+1}}  \tchi'(0) \tpsi^2(x/h^{\eta_k}) \tpsi^2(0) + \O(h^{-\eta_k})\\
  & = (h^{-\eta_{k+1}}/2) \tpsi(x/\epsilon) + \O(h^{-\eta_k}).\label{E:rho-y-0-2}
  \end{align}

The only new element in the proof at this point is in estimating the boundary terms involving the $A_j$, so let us very quickly see what happens.  
 Continuing, using Remark \ref{R:corner-vanish-2}:
  \begin{align}
  \int_{\tau = 0}^r  h \tA_j(\tau) (h \p_\tau \phi_h) \bar{\phi}_j d \tau & = \frac{h^2}{2} \int_0^r \tA_j \p_\tau |\phi_h|^2 d \tau \notag \\
  & = -\frac{h^2}{2} \int_0^r (\p_\tau \tA_j) | \phi_h|^2 d \tau \notag \\ 
  & = \O(h^{1-\eta_{k+1} + \eta_k}) \notag \\
  & = \O(1) \label{E:Aj-control-3}
  \end{align}
  from Lemma \ref{L:Sobolev} with $h^{\eta_k}$.

  \end{proof}


 \bibliographystyle{alpha}
 \bibliography{HC-bib}

\end{document}